\DeclareMathAlphabet{\eusm}{OT1}{eusm}{m}{n}
\newtheorem{theorem}{Theorem}[section]
\newtheorem{proposition}[theorem]{Proposition}
\newtheorem{corollary}[theorem]{Corollary}
\newtheorem{lemma}[theorem]{Lemma}
\theoremstyle{definition}
\newtheorem{definition}[theorem]{Definition}
\newtheorem{example}[theorem]{Example}
\theoremstyle{remark}
\newtheorem{remark}[theorem]{\bf Remark}
\newtheorem{question}[theorem]{\bf Question}
\tikzstyle{vertex}=[circle, draw, fill=black, inner sep=0pt, minimum size=6pt]
\def\Soc{\mbox{Soc\/}}
\def\End{\mbox{End\/}}
\newcommand{\gr}{\operatorname{gr}}
\newcommand{\Ma}{\operatorname{\mathbb M}}
\begin{document}
\title[Graded direct-finiteness and graded $\Sigma$-injective simple modules]{Leavitt path algebras: Graded direct-finiteness and graded $\Sigma$-injective simple modules}
\subjclass[2010]{16D50, 16D60.}
\keywords{Leavitt path algebras, bounded index of nilpotence, direct-finiteness, simple modules, injective modules, $\Sigma$-injective modules}
\author{Roozbeh Hazrat}
\address{Centre for Research in Mathematics, Western Sydney University, Australia}
\email{r.hazrat@westernsydney.edu.au}
\author{Kulumani M. Rangaswamy}
\address{Department of Mathematics, University of Colorado at Colorado Springs, Colorado-80918, USA}
\email{krangasw@uccs.edu}
\author{Ashish K. Srivastava}
\address{Department of Mathematics and Statistics, St. Louis University, St.
Louis, MO-63103, USA} \email{ashish.srivastava@slu.edu}

\thanks{The first author would like to acknowledge Australian Research Council grants DP150101598 and DP160101481. A part of this work was done at the University of M\"unster, where he was a Humboldt Fellow. The work of the third author is partially supported by a grant from Simons Foundation (grant number 426367).}
\maketitle

\begin{abstract}
In this paper, we give a complete characterization of Leavitt path algebras
which are graded $\Sigma $-$V$ rings, that is, rings over which a direct sum
of arbitrary copies of any graded simple module is graded injective.
Specifically, we show that a Leavitt path algebra $L$ over an arbitrary
graph $E$ is a graded $\Sigma $-$V$ ring if and only if it is a subdirect
product of matrix rings of arbitrary size but with finitely many non-zero
entries over $K$ or $K[x,x^{-1}]$ with appropriate matrix gradings. We also
obtain a  graphical characterization of such a graded $\Sigma $-$V$ ring $L$%
. When the graph $E$ is finite, we show that $L$ is a graded $\Sigma $-$V$
ring $\Longleftrightarrow L$ is graded directly-finite $\Longleftrightarrow L
$ has bounded index of nilpotence $\Longleftrightarrow $ $L$ is graded
semi-simple. Examples show that the equivalence of these properties in the
preceding statement no longer holds when the graph $E$ is infinite.
Following this, we also characterize Leavitt path algebras $L$ which are
non-graded $\Sigma $-$V$ rings. Graded rings which are graded directly-finite are explored and it is shown that if a Leavitt path algebra $L$ is a graded $\Sigma$-$V$ ring, then $L$ is always graded directly-finite. Examples show the subtle differences between graded and non-graded directly-finite rings. Leavitt path algebras which are graded directly-finite are shown to be directed unions of graded semisimple rings. Using this, we give an alternative proof of a theorem of Va\v{s} \cite{V} on directly-finite Leavitt path algebras.
\end{abstract}

\bigskip

\noindent Leavitt path algebras are algebraic analogues of graph $C^*$-algebras and are also natural generalizations of Leavitt algebras of type $($$1, n$$)$ constructed in \cite{Leavitt}. The first objective of this paper is to characterize graded $\Sigma$-$V$ Leavitt path algebras, namely, those over which direct sum of arbitrary copies of any graded simple module is graded injective. We also describe the larger class of graded directly-finite Leavitt path algebras.  

The initial organized attempt to study the module theory over Leavitt path algebras was done in \cite{AB} where the simply presented modules over a Leavitt path algebra $L_{K}(E)$ of a finite graph $E$ with coefficients in the field $K$, were described.
As an important step in the study of the modules over $L_{K}(E)$ for an
arbitrary graph $E$, the simple $L_{K}(E)$-modules and\ also Leavitt path
algebras with simple modules of special types, have recently been investigated
in a series of papers (see e.g. \cite{AR1}, \cite{C}, \cite{R-1}, \cite{HR}). In this paper we focus on the question when graded simple modules over Leavitt path algebras are not only graded injective but they are graded $\Sigma$-injective and we link this question to the classical ring theoretic properties such as bounded index of nilpotence and direct finiteness.

The study of $V$-rings and $\Sigma$-$V$ rings is a continuation of a long tradition of characterizing rings in terms of certain properties of their cyclic or finitely generated modules (see e.g. \cite{JST}). The origin of this tradition can be drawn back to the celebrated characterization due to Osofsky \cite{O,O1} of semisimple rings as those rings for which cyclic right modules have zero injective dimension, thus obtaining for rings with zero global dimension a dual of Auslander's characterization of global dimension in terms of the projective dimension of cyclic modules. Damiano studied rings over which each proper cyclic right module is injective and showed that such rings are right noetherian \cite{D}. Villamayor was first to study rings over which not necessarily all cyclic but all simple right modules are injective. A ring $R$ is called a right $V$-ring if each simple right $R$-module is injective \cite{Vil}. It is a well-known result due to Kaplansky that a commutative ring is von Neumann regular if and only if it is a $V$-ring. However, in the case of noncommutative setting, the classes of von Neumann regular rings and $V$-rings are quite independent. It is known due to Cartan, Eilenberg, and Bass that a ring $R$ is right noetherian if and only if every direct sum of injective right $R$-modules is injective \cite{Bass}. From this it follows that a ring $R$ is right noetherian if and only if each injective right $R$-module is $\Sigma$-injective. So this observation led to the study of rings over which certain classes of modules are $\Sigma$-injective. Faith intitiated the study of rings over which for each cyclic module $C$, the injective envelope $E(C)$ is $\Sigma$-injective and conjectured that such rings are right noetherian \cite{Faith}. This conjecture is still open. A ring $R$ is called a right $\Sigma$-$V$ ring if each simple right $R$-module is $\Sigma$-injective. These rings were introduced in \cite{GV} and studied subsequently in \cite{Baccellaproc} and \cite{S}. A ring $R$ is called a graded right $\Sigma$-$V$ ring if each graded simple right $R$-module is graded $\Sigma$-injective.  

In this paper, we characterize, both graphically and algebraically, the
Leavitt path algebra $L:=L_{K}(E)$ of an arbitrary graph $E$ which is a
graded $\Sigma $-$V$ ring. Specifically, we show that $L$ is a graded $%
\Sigma $-$V$ ring if and only if it is a subdirect product of matrix rings
of arbitrary size but with finitely many non-zero entries over $K$ or $%
K[x,x^{-1}]$ with appropriate matrix gradings. Examples are constructed
showing that such a subdirect product of matrix rings need not decompose as
a direct sum of matrix rings. We also obtain characterizing graphing
conditions under which $L$ becomes a graded $\Sigma $-$V$ ring. Furthermore,
if $E$ is a finite graph, then $L$ is a graded $\Sigma $-$V$ ring if and
only if $L$ has bounded index of nilpotence if and only if $L$ is graded
directly-finite if and only if $L$ is graded semisimple. Leavitt path
algebras which are non-graded $\Sigma $-$V$ rings are also completely
described. If $L$ is a $\Sigma$-$V$ ring, then we show that $L$ is graded directly-finite. The notions of graded directly-finite rings and graded directly-finite modules are investigated and the subtle distinction from direct-finiteness is illustrated. By considering the matrix ring $\Ma_2 (L(2,3))$ over the Leavitt algebra $L(2,3)$, we show that the two notions of graded directly-finite rings and directly-finite rings are not equivalent, in general. However, they coincide for Leavitt path algebras of arbitrary graphs. Our investigation leads to a structural description of graded directly-finite Leavitt path algebras as being directed unions graded semisimple  subalgebras. Using our approach, we are able to give an alternative and shorter proof of a theorem of Va\v{s} \cite{V} on directly-finite Leavitt path algebras.

\section{Preliminaries}

\noindent For the general notation, terminology and results in Leavitt path algebras, we
refer to \cite{AAS-1}, \cite{R-1} and \cite{T}. We will be using some of the
needed results in associative rings, von Neumann regular rings and modules
from \cite{G} and \cite{L}. We give below an outline of some of the needed basic
concepts and results.

A (directed) graph $E=(E^{0},E^{1},r,s)$ consists of two sets $E^{0}$ and
$E^{1}$ together with maps $r,s:E^{1}\rightarrow E^{0}$. The elements of
$E^{0}$ are called \textit{vertices} and the elements of $E^{1}$
\textit{edges}.

A vertex $v$ is called a \textit{sink} if it emits no edges and a vertex $v$
is called a \textit{regular} \textit{vertex} if it emits a non-empty finite
set of edges. A vertex $v$ is called a \textit{source} if it receives no edges. An \textit{infinite emitter} is a vertex which emits infinitely
many edges. For each $e\in E^{1}$, we call $e^{\ast}$ a \textit{ghost edge}. We let
$r(e^{\ast})$ denote $s(e)$, and we let $s(e^{\ast})$ denote $r(e)$.
A\textit{\ path} $\mu$ of length $n>0$ is a finite sequence of edges
$\mu=e_{1}e_{2}\cdot \cdot \cdot e_{n}$ with $r(e_{i})=s(e_{i+1})$ for all
$i=1,\cdot\cdot\cdot,n-1$. In this case $\mu^{\ast}=e_{n}^{\ast}\cdot
\cdot\cdot e_{2}^{\ast}e_{1}^{\ast}$ is the corresponding \textit{ghost path}. A vertex
is considered a path of length $0$.

A path $\mu$ $=e_{1}\dots e_{n}$ in $E$ is \textit{closed} if $r(e_{n}%
)=s(e_{1})$, in which case $\mu$ is said to be \textit{based at the vertex
}$s(e_{1})$. A closed path $\mu$ as above is called \textit{simple} provided
it does not pass through its base more than once, i.e., $s(e_{i})\neq
s(e_{1})$ for all $i=2,...,n$. The closed path $\mu$ is called a
\textit{cycle} if it does not pass through any of its vertices twice, that is,
if $s(e_{i})\neq s(e_{j})$ for every $i\neq j$.

A graph $E$ is said to satisfy \textit{Condition (K)}, if any vertex $v$ on a
closed path $c$ is also the base of a another closed path $c^{\prime}$ different from $c$.

An \textit{exit }for a path $\mu=e_{1}\dots e_{n}$ is an edge $e$ such that
$s(e)=s(e_{i})$ for some $i$ and $e\neq e_{i}$.

If there is a path from vertex $u$ to a vertex $v$, we write $u\geq v$. A non-empty subset $D$ of vertices is said to be \textit{downward directed }\ if for any
$u,v\in D$, there exists a $w\in D$ such that $u\geq w$ and $v\geq w$. A
subset $H$ of $E^{0}$ is called \textit{hereditary} if, whenever $v\in H$ and
$w\in E^{0}$ satisfy $v\geq w$, then $w\in H$. A hereditary set is
\textit{saturated} if, for any regular vertex $v$, $r(s^{-1}(v))\subseteq H$
implies $v\in H$. For any vertex $v\in E^0$, the set $T_E(v)=\{w\in E^0: v\geq w\}$ is called the \textit{tree} of vertex $v$ and it is the smallest hereditary subset of $E^0$ containing $v$. A \textit{maximal tail }is a subset $M$ of $E^0$ satisfying the following three properties:
\begin{enumerate}

\item $M$ is downward directed, that is, for any pair of vertices $u,v\in M$, there exists a
$w\in M$ such that $u\geq w,v\geq w$;

\item If $u\in M$ and $v\in E^{0}$ satisfies $v\geq u$, then $v\in M$;

\item If $u\in M$ emits edges, there is at least one edge $e$ with $s(e)=u$
and $r(e)\in M$.
\end{enumerate}

A path $p=e_{1}e_{2}\cdot \cdot \cdot e_{n}\cdot \cdot \cdot $ is said
to have a \textit{bifurcation} at a vertex, say $s(e_{k})$, if there is an
edge $f\neq e_{k}$ but $s(f)=s(e_{k})$. In this case $f$ is called a \textit{bifurcating edge}. An infinite path $p$ is called an \textit{infinite
sink} if it contains no bifurcations and no cycles. So $p$ is just a infinite straight line segment $\bullet \longrightarrow \bullet
\longrightarrow \bullet \longrightarrow \bullet \cdots $. A vertex $u$ in $E^{0}$ is called a \textit{line point} if there are neither bifurcations nor cycles at any vertex $w\in T_E(u)$. 

Given an infinite path $p=e_1e_2\cdot \cdot \cdot e_n \cdot \cdot \cdot$ and an integer $n\ge 1$, define $\tau_{\le n}(p)=e_1\cdot \cdot \cdot e_n$ and $\tau_{>n}(p)=e_{n+1}e_{n+2}\cdot \cdot \cdot $. Two infinite paths $p$ and $q$ are said to be \textit{tail-equivalent} if there exist positive integers $m, n$ such that $\tau_{>m}(p)=\tau_{>n}(q)$. An infinite path $p$ is called a \textit{rational path} if $p=ggg \cdot \cdot \cdot $ where $g$ is a (finite) closed path in $E$. An infinite path $q=e_{1}e_{2}\cdot \cdot \cdot e_{n}\cdot \cdot \cdot $ is said to \textit{end in a sink} if there is a $k\geq 1$ such that $e_{k}e_{k+1}\cdot
\cdot \cdot $ is an infinite sink. In other words, $q$ is tail-equivalent to
an infinite sink. An infinite path $p$ is said to \textit{end in a cycle}, if there is a cycle $c$
such that $p$ is tail-equivalent to the rational path $ccc\cdot \cdot \cdot $.

Given an arbitrary graph $E$ and a field $K$, the \textit{Leavitt path algebra
}$L_{K}(E)$ is defined to be the $K$-algebra generated by a set $\{v:v\in
E^{0}\}$ of pair-wise orthogonal idempotents together with a set of variables
$\{e,e^{\ast}:e\in E^{1}\}$ which satisfy the following conditions:

(1) \ $s(e)e=e=er(e)$ for all $e\in E^{1}$;

(2) $r(e)e^{\ast}=e^{\ast}=e^{\ast}s(e)$\ for all $e\in E^{1}$;

(3) (The ``CK-1 relations") For all $e,f\in E^{1}$, $e^{\ast}e=r(e)$ and
$e^{\ast}f=0$ if $e\neq f$;

(4) (The ``CK-2 relations") For every regular vertex $v\in E^{0}$,
\[
v=\sum_{e\in E^{1}, s(e)=v}ee^{\ast}.
\]
Every Leavitt path algebra $L_{K}(E)$ is a $\mathbb{Z}$\textit{-graded algebra}, namely, $L_{K}(E)=\bigoplus\limits_{n\in\mathbb{Z}}L_{n}$ induced by defining, for all $v\in E^{0}$ and $e\in E^{1}$, $\deg
(v)=0$, $\deg(e)=1$, $\deg(e^{\ast})=-1$. Here the $L_{n}$ are abelian
subgroups satisfying $L_{m}L_{n}\subseteq L_{m+n}$ for all $m,n\in\mathbb{Z}$. Further, for each $n\in\mathbb{Z}$, the \textit{homogeneous component }$L_{n}$ is given by
\[
L_{n}=\Big \{%
\sum
k_{i}\alpha_{i}\beta_{i}^{\ast}\in L_K(E):\text{ }|\alpha_{i}|-|\beta_{i}|=n\Big\}.
\]
Elements of $L_{n}$ are called \textit{homogeneous elements}. An ideal $I$ of
$L_{K}(E)$ is said to be a \textit{graded ideal} if $I=$ $%
{\displaystyle\bigoplus\limits_{n\in\mathbb{Z}}}
(I\cap L_{n})$. If $A,B$ are graded modules over a graded ring $R$, we write
$A\cong_{\gr}B$ if $A$ and $B$ are graded isomorphic.
We will also be using the usual grading
of a matrix of finite order. For this and for the various properties of graded
rings and graded modules, we refer to \cite{H-1}, \cite{H} and \cite{NvO}.

A \textit{breaking vertex }of a hereditary saturated subset $H$ is an infinite
emitter $w\in E^{0}\backslash H$ with the property that $0<|s^{-1}(w)\cap
r^{-1}(E^{0}\backslash H)|<\infty$. The set of all breaking vertices of $H$ is
denoted by $B_{H}$. For any $v\in B_{H}$, $v^{H}$ denotes the element
$v-\sum_{s(e)=v,r(e)\notin H}ee^{\ast}$. Given a hereditary saturated subset
$H$ and a subset $S\subseteq B_{H}$, $(H,S)$ is called an \textit{admissible
pair.} Given an admissible pair $(H,S)$, the ideal generated by $H\cup
\{v^{H}:v\in S\}$ is denoted by $I(H,S)$. It was shown in \cite{T} that the
graded ideals of $L_{K}(E)$ are precisely the ideals of the form $I(H,S)$ for
some admissible pair $(H,S)$. Moreover, $L_{K}(E)/I(H,S)\cong L_{K}%
(E\backslash(H,S))$. Here $E\backslash(H,S)$ is a \textit{quotient graph of
}$E$, where $(E\backslash(H,S))^{0}=(E^{0}\backslash H)\cup\{v^{\prime}:v\in
B_{H}\backslash S\}$ and $(E\backslash(H,S))^{1}=\{e\in E^{1}:r(e)\notin
H\}\cup\{e^{\prime}:e\in E^{1}$ with $r(e)\in B_{H}\backslash S\}$ and $r,s$
are extended to $(E\backslash(H,S))^{0}$ by setting $s(e^{\prime})=s(e)$ and
$r(e^{\prime})=r(e)^{\prime}$.

\noindent We will also be using the fact that the Jacobson radical (and in particular,
the prime/Baer radical) of $L_{K}(E)$ is always zero (see \cite{AAS-1}). It is known (see \cite{R-1}) that if $P$ is a prime ideal of $L$ with $P\cap
E^{0}=H$, then $E^{0}\backslash H$ is downward directed.

Let $\Lambda$ be an infinite set and $R$ a ring. Then $\Ma_{\Lambda}(R)$ denotes the ring of $\Lambda \times \Lambda$ matrices in which all except at most finitely many entries are non-zero. 

We will denote by mod-$R$ ($R$-mod), the category of all right (left) $R$-modules. In case of a ring 
$R$ with local units, we consider the full subcategory Mod-$R$ ($R$-Mod) of
the category mod-$R$ ($R$-mod) consisting of right (left) modules $M$ which
are \textit{unital} in the sense that $MR=M$ ($RM=M$). Throughout this paper we will work with unital modules.

Recall that a ring $R$ is said to have \textit{bounded index of} \textit{%
nilpotence} if there is a positive integer $n$ such that $x^{n}=0$ for all
nilpotent elements $x$ in $R$. If $n$ is the least such integer then $R$ is
said to have \textit{index of nilpotence} $n$.

We shall be using some of the results from \cite{AAPS} and \cite{T} where the graphs
that the authors consider are assumed to be countable. We wish to point out
that the results in these two papers hold for arbitrary graphs with no
restriction on the number of vertices or the number of edges emitted by any
vertex. In fact, these results without any restriction on the size of the
graph are stated and proved in \cite{AAS-1}.


\section{Grading of infinite matrices and Leavitt path algebras} \label{grprem} 

\noindent We begin with some basics of graded rings and graded modules that we will need throughout this paper. Let $\Gamma$ be an additive abelian group, $R$ a $\Gamma$-graded ring and $M$ and $N$ graded right
$R$-modules. Consider the hom-group $\operatorname{Hom}_{R}(M,N)$. One can
show that if $M$ is finitely generated or $\Gamma$ is a finite group, then
\[
\operatorname{Hom}_{R}(M,N)=\bigoplus_{\gamma\in\Gamma} \operatorname{Hom}%
(M,N)_{\gamma},
\]
where $\operatorname{Hom}(M,N)_{\gamma}=\{f:M\rightarrow N : f(M_{\alpha
})\subseteq N_{\alpha+\gamma}, \alpha\in\Gamma\}$ (see \cite[\S ~1.2.3]{H}).
However if $M$ is not finitely generated, then throughout we will work with
the group
\[
\operatorname{HOM}_{R}(M,N):=\bigoplus_{\gamma\in\Gamma} \operatorname{Hom}%
(M,N)_{\gamma},
\]
and consequently the $\Gamma$-graded ring
\begin{equation}\label{ferryyy}
\operatorname{END}_{R}(M):=\operatorname{HOM}_{R}(M,M).
\end{equation}

If $f\in\operatorname{END}_{R}(M)$ then $f=\sum_{\gamma\in\Gamma} f_{\gamma}$,
where $f_{\gamma}\in\operatorname{Hom}(M,M)_{\gamma}$ and we write
$\operatorname{supp}(f)=\{\gamma\in\Gamma : f_{\gamma}\not =0\}$.

Let us recall the grading of matrices of finite order and then we will indicate how to extend this to the case of infinite matrices in which at most
finitely many entries are non-zero (see \cite{H-1} , \cite{HR} and \cite{NvO}).

\label{matrixGrading}Let $R$ be a
$\Gamma$-graded ring and $(\delta_{1},\cdot\cdot\cdot,\delta_{n})$ an
$n$-tuple where $\delta_{i}\in\Gamma$. Then $\Ma_{n}(R)$ is a $\Gamma$-graded
ring and, for each $\lambda\in\Gamma$, its $\lambda$-homogeneous component
consists of $n\times n$ matrices%

\begin{equation}\label{one}
\Ma_{n}(R)(\delta_{1},\cdot\cdot\cdot,\delta_{n})_{\lambda}=\left(
\begin{array}
[c]{cccccc}%
R_{\lambda+\delta_{1}-\delta_{1}} & R_{\lambda+\delta_{2}-\delta_{1}} & \cdot
& \cdot & \cdot & R_{\lambda+\delta_{n}-\delta_{1}}\\
R_{\lambda+\delta_{1}-\delta_{2}} & R_{\lambda+\delta_{2}-\delta_{2}} &  &  &
& R_{\lambda+\delta_{n}-\delta_{2}}\\
&  &  &  &  & \\
&  &  &  &  & \\
&  &  &  &  & \\
R_{\lambda+\delta_{1}-\delta_{n}} & R_{\lambda+\delta_{2}-\delta_{n}} &  &  &
& R_{\lambda+\delta_{n}-\delta_{n}}%
\end{array}
\right).
\end{equation}

This shows that for each homogeneous element $x\in R$,
\begin{equation}\label{two}
\deg(e_{ij}(x))=\deg(x)+\delta_{i}-\delta_{j}
\end{equation}
where $e_{ij}(x)$ is a matrix with $x$ in the $ij$-position and with every
other entry $0$.

Now let $I$ be an arbitrary infinite index set and $R$ be a $\Gamma$-graded ring. Denote by $\Ma_{I}(R)$ the matrix with entries indexed by $I\times I$
having all except finitely many entries non-zero and for each $(i,j)\in
I\times I$, the $ij$-position is denoted by $e_{ij}(a)$, where $a\in R$.
Considering a ``vector" $\bar{\delta}:=(\delta_{i})_{i\in I}$, where $\delta
_{i}\in\Gamma$ and following the usual grading on the matrix ring (see equations~(\ref{one}),(\ref{two})), define, for each homogeneous element $a$,%
\begin{equation}\label{three}
\deg(e_{ij}(a))=\deg(a)+\delta_{i}-\delta_{j}.
\end{equation}
This makes $\Ma_{I}(R)$ a $\Gamma$-graded ring, which we denote by
$\Ma_{I}(A)(\bar{\delta})$. Clearly, if $I$ is finite with $|I|=n$, then the usual graded matrix ring $\Ma_n(R)$ coincides (after a suitable permutation) with $\Ma_{n}(R)(\delta
_{1},\cdot\cdot\cdot,\delta_{n})$.

Now, let us translate it in the context of Leavitt path algebras. Suppose $E$ is a finite acyclic graph consisting of exactly one sink $v$. Let
$\{p_{i}:1\leq i\leq n\}$ be the set of all paths ending at $v$. Then it was
shown in (\cite[Lemma 3.4]{AAS-1})
\begin{equation}\label{four}
L_{K}(E)\cong \Ma_{n}(K)
\end{equation}
under the map $p_{i}p_{j}^{\ast}\longmapsto e_{ij}$. Now taking into account
the grading of $\Ma_{n}(K)$, it was further shown in (\cite[Theorem 4.14]{H-1})
that the map (\ref{four}) induces a graded isomorphism
\begin{align}\label{five}
L_{K}(E)&\longrightarrow \Ma_{n}(K)(|p_{1}|,\cdot\cdot\cdot,|p_{n}|),\\
p_{i}p_{j}^{\ast}&\longmapsto e_{ij\text{.}} \notag
\end{align}
In the case of a comet graph $E$ (that is, a finite graph $E$ in which every
path ends at a vertex on a cycle $c$ without exits), it was shown
in \cite{AAS-1} that the map
\begin{align}\label{six}
L_{K}(E)&\longrightarrow \Ma_{n}(K[x,x^{-1}]),\\
p_{i}c^{k}p_{j}^{\ast}&\longmapsto e_{ij}(x^{k})\notag
\end{align}
induces an isomorphism. Again taking into account the grading, it was shown in
(\cite[Theorem 4.20]{H-1}) that the map (\ref{six}) induces a graded isomorphism
\begin{align}\label{seven}
L_{K}(E)&\longrightarrow \Ma_{n}(K[x^{|c|},x^{-|c|}])(|p_{1}|,\cdot\cdot\cdot
\cdot,|p_{n}|),\\
p_{i}c^{k}p_{j}^{\ast}&\longmapsto e_{ij}(x^{k|c|}).\notag
\end{align}
Later in~\cite[Proposition 3.6]{AAPS}, the isomorphisms
(\ref{four}) and (\ref{six}) were extended to infinite acyclic and infinite comet graphs
respectively. The same isomorphisms with
the grading adjustments will induce graded isomorphisms for Leavitt path
algebras of such graphs. We now describe this extension below.

Let $E$ be a graph such that no cycle in $E$ has an exit and such that every
infinite path contains a line point or is tail-equivalent to a rational path
$ccc\cdots$ where $c$ is a cycle (without exits). Define an
equivalence relation in the set of all line points in $E$ by setting $u\sim v$
if $T_{E}(u)\cap T_{E}(v)\neq\emptyset$. Let $X$ be the set of
representatives of distinct equivalence classes of line points in $E$, so that
for any two line points $u,v\in X$ with $u\neq v$, $T_{E}(u)\cap
T_{E}(v)=\emptyset$. For a line point which is not an infinite sink, we choose the sink which is connected to the line point as the representative.  For each vertex \ $v_{i}\in X$, let 
$\overline{{p^{v_{i}}}}:=\{p_{s}^{v_{i}}: 1\leq s \leq n_i\}$ be the set of all paths that end at
$v_{i}$. If $v_i \in X$ is a sink, denote by $|\overline{{p^{v_{i}}}|}=\{|p_{s}^{v_{i}}|: 1\leq s \leq n_i\}$, otherwise 
$|\overline{{p^{v_{i}}}|}=\{\cdots, -2,-1,|p_{s}^{v_{i}}|: 1\leq s \leq n_i\}$.

Let $Y$ be the set of all distinct cycles in $E$. As before, for each cycle
$c_{j}\in Y$ based at a vertex $w_{j}$, let $\overline{{q^{w_{j}}}}
:=\{q_{r}^{w_{j}}:1\leq r\leq m_{j}\}$ be the set of all paths that end at
$w_{j}$ that do not include all the edges of $c_{j}$ where $m_{j}$ is could possibly be infinite. Let $|$ $\overline{{q^{w_{j}}}
}|:=\{|q_{r}^{w_{j}}|: 1\leq r\leq m_{j}\}$. Then the isomorphisms (\ref{five}) and (\ref{seven})
extend to a 
$\mathbb{Z}$-graded isomorphism
\begin{equation}\label{eight}
L_{K}(E)\cong_{\gr}
{\displaystyle\bigoplus\limits_{v_{i}\in X}}
\Ma_{n_{i}}(K)(|\overline{{p^{v_{i}}}}|)\oplus
{\displaystyle\bigoplus\limits_{w_{j}\in Y}}
\Ma_{m_{j}}(K[x^{|c_{j}|},x^{-|c_{j}|}])(|\overline{{q^{w_{j}}}}
|)
\end{equation}
where the grading is as in (\ref{three}).

\section{Graded directly-finite Leavitt path algebras}

\noindent A ring $R$ with identity $1$ is said to be \textit{directly-finite} or \textit{Dedekind finite} if for any
two elements $x,y\in R$, $xy=1$ implies $yx=1$. A ring $R$ with local units is
said to be directly-finite if for every $x,y\in R$ and an idempotent
$u\in R$ such that $ux=xu=x$ and $uy=yu=y$, we have that $xy=u$ implies
$yx=u$. Equivalently, for every local unit $u$, $uR$ is not isomorphic to any
proper direct summand. This is same as saying that the corner ring $uRu$ is a
directly-finite ring with identity.

Note that if $R$ is a unital directly finite ring then it is directly finite in the locally-unital sense as well. Indeed, assuming that $xy = u$ for an idempotent element $u$ with $xu = ux = x$ and $yu = uy = y$ we have that $(x + 1 - u)(y + 1 - u) = xy + 1 - u = 1$. This implies that $1 = (y + 1 - u)(x + 1 - u) = yx + 1 - u$ and from this it follows that $yx = u$~\cite[\S 4]{V}. 

We say a $\Gamma$-graded ring $R$ has \emph{homogeneous local units} if for any finite set of
homogeneous elements  $\{x_{1}, \cdots, x_{n}\}\subseteq R$, there exists a homogeneous
idempotent $e\in R$ such that $\{x_{1}, \cdots, x_{n}\}\subseteq eRe$. Equivalently, $R$
has homogeneous local units if $R_0$ has local units and $R_0
R_{\gamma}=R_{\gamma}R_0=R_{\gamma}$ for every $\gamma \in \Gamma$. 
In general, this is stronger than just having local units, but if $R$ is 
strongly graded then the two notions coincide

A $\Gamma$-graded ring $R$  with unit is called \textit{graded directly-finite}  if for any homogeneous elements $x,y \in R$, $xy=1$ implies $yx=1$. In order to extend the definition to the case of a graded ring with local units, one needs a restriction to a set of homogeneous idempotents (see Example~\ref{memejh}).
A $\Gamma$-graded ring $R$ with homogeneous local units is called \textit{graded directly-finite with respect to a set of homogeneous idempotent S} if the unital graded ring $uRu$, $u \in S$ is graded directly-finite. This means, 
for any two homogeneous elements $x, y\in R^h$ and $u\in S$ such that $xu=x=ux$ and $yu=y=uy$, we have that $xy=u$ implies $yx=u$.  Clearly here $u\in R_0$ and if $x\in R_{\alpha}$ then $y\in R_{-\alpha}$.

Clearly, a directly-finite ring is graded directly-finite but a graded directly-finite ring need not be directly-finite (see Example~\ref{bergmanex}). However in this section we show that in the case of Leavitt path algebras these two concepts coincide (Theorem~\ref{munclassroom}).

Let $R$ be a $\Gamma$-graded ring and $A$, a graded right
$R$-module. Recall that the $\alpha$-\textit{suspension} of the module $A$ is defined as $A(\alpha
)=\bigoplus_{\gamma\in\Gamma}A(\alpha)_{\gamma}$, where $A(\alpha)_{\gamma
}=A_{\alpha+\gamma}$.

\begin{definition}\label{jjuuii}
\label{fgfhr} Let $R$ be a $\Gamma$-graded ring and $A$, a graded right
$R$-module. We say $A$ is \textit{graded directly-finite} if 
$A\cong_{\gr} A(\alpha) \oplus C$, for some $\alpha\in\Gamma$, then
$C=0$.
\end{definition}

The following proposition showcases how the suspensions of a module would come
into play when carrying the results from nongraded case to the graded setting.
Example~\ref{dell} will then show the delicate nature of graded setting and
how things might differ when carrying concepts from nongraded to graded rings. For the following proposition recall the definition of $\operatorname{END}_{R}(A)$ from (\ref{ferryyy}).

\begin{proposition}\label{prop23}
\label{directlyfin} Let $R$ be a $\Gamma$-graded ring and $A$ a graded right
$R$-module. Then $A$ is a graded
directly-finite $R$-module if and only if $\operatorname{END}_{R}(A)$ is a
graded directly-finite ring.
\end{proposition}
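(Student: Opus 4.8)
The plan is to translate the module-theoretic condition directly into a statement about idempotents and direct summands inside the graded endomorphism ring, exactly as one does in the ungraded case, but keeping careful track of suspensions. Write $S=\operatorname{END}_R(A)$, a $\Gamma$-graded ring. I will prove the contrapositive in both directions, or rather establish the equivalence ``$A$ is \emph{not} graded directly-finite $\iff$ $S$ is \emph{not} graded directly-finite.''

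First I would observe the key dictionary: if $A\cong_{\gr} A(\alpha)\oplus C$ with $C\neq 0$, then this graded decomposition furnishes graded homomorphisms realizing $A(\alpha)$ as a proper direct summand of $A$. Concretely, the inclusion and projection give $f\in\operatorname{Hom}(A,A(\alpha))$ and $g\in\operatorname{Hom}(A(\alpha),A)$ with $fg=\operatorname{id}_{A(\alpha)}$ and $gf$ a homogeneous idempotent in $S$ that is not $1$ (its complementary idempotent projects onto the copy of $C$). Now $\operatorname{Hom}_R(A,A(\alpha))_\gamma \cong \operatorname{Hom}(A,A)_{\gamma+\alpha}$ and likewise $\operatorname{Hom}_R(A(\alpha),A)_\gamma\cong \operatorname{Hom}(A,A)_{\gamma-\alpha}$, so $g$ and $f$ correspond to homogeneous elements $x\in S_{-\alpha}$ (wait---let me be careful) respectively $y\in S_\alpha$ of $S$ itself, with $xy=1$ but $yx\neq 1$. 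That exhibits $S$ as not graded directly-finite. For the converse, if $x,y\in S$ are homogeneous with $xy=1$ but $yx\neq 1$, say $x\in S_{-\alpha}$, then $e:=yx$ is a nonzero homogeneous idempotent $\neq 1$, and the standard Peirce argument shows $A = eA \oplus (1-e)A$ as graded modules, while $x,y$ restrict to mutually inverse graded isomorphisms between $A$ and $eA$; tracking the degree $\alpha$ through the hom-group identifications above yields $eA\cong_{\gr} A(\alpha)$ (up to a suspension), hence $A\cong_{\gr} A(\alpha)\oplus (1-e)A$ with $(1-e)A\neq 0$.

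The step I expect to require the most care---and the reason the suspensions are mentioned in the preamble to the proposition---is getting the degree bookkeeping right: a homogeneous element of $S$ of nonzero degree $\gamma$ is precisely a \emph{graded} map $A\to A(\gamma)$ (equivalently $A(-\gamma)\to A$), so the equation $xy=1$ with $x,y$ homogeneous forces $\deg x = -\deg y$, and the resulting split injection $A\hookrightarrow A$ is genuinely a graded embedding only after the appropriate suspension. One must check that the idempotent $yx$ really is homogeneous (it lies in $S_0$), that its image as a submodule of $A$ is graded, and that the induced isomorphism onto that image is degree-preserving after suspending by $\alpha=\deg y$. I would also note that when $\Gamma$ is infinite and $A$ is not finitely generated, it is essential that we use $\operatorname{END}_R(A)=\operatorname{HOM}_R(A,A)$ rather than the full $\operatorname{Hom}_R(A,A)$, since only the former is graded and only its homogeneous elements correspond to graded maps between suspensions; this is exactly why the statement is phrased with $\operatorname{END}$. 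Everything else is the routine Peirce-decomposition argument, identical to the proof that $A$ is directly-finite iff $\operatorname{End}_R(A)$ is directly-finite, and I would not belabor those calculations.
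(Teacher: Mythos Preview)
Your proposal is correct and follows essentially the same route as the paper's proof: both directions use the standard Peirce-decomposition argument, translating between a graded decomposition $A\cong_{\gr}A(\alpha)\oplus C$ and a pair of homogeneous elements $x\in S_\alpha$, $y\in S_{-\alpha}$ with $xy=1_A$, $yx\neq 1_A$, via the projection/inclusion maps. The paper is slightly more explicit in writing out the decomposition $A=yxA\oplus(1_A-yx)A$ and the graded isomorphism $A\to yA$, $a\mapsto ya$, but your outline captures exactly these steps, and your remarks about the degree bookkeeping (that $fg=\operatorname{id}_{A(\alpha)}$ corresponds to $xy=1_A$ in $S_0$ under the identification of underlying sets) and about the necessity of using $\operatorname{END}$ rather than $\operatorname{Hom}$ are precisely the points the argument hinges on.
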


\begin{proof}
First suppose $\operatorname{END}_{R}(A)$ is not graded directly-finite. Thus
there are $x \in\operatorname{Hom}_{R}(A,A)_{\alpha}$ and $y \in
\operatorname{Hom}_{R}(A,A)_{-\alpha}$ such that $xy=1_{A}$ but $yx\not =
1_{A}$. Note that $yx$ is a homogeneous idempotent. Now $yA=\bigoplus
_{\gamma\in\Gamma} (yA)_{\gamma}$, where $(yA)_{\gamma}=yA_{\gamma}$, is a
graded submodule of $A(-\alpha)$. Since $xy=1_A$, we have a natural graded
isomorphism $A\rightarrow yA, a\mapsto ya$. On the other hand, we have
\begin{align*}
A & =yA(\alpha)\bigoplus(1_A-yx)A\\
a & = yxa+(1_A-yx)a
\end{align*}
It follows that $A\cong_{\gr}A(\alpha)\bigoplus(1_A-yx)A$ so $A$
is \textit{graded directly-infinite}.

Conversely, if $\theta: A\cong_{\operatorname{gr}} A(\alpha)\oplus B$ for some
$\alpha\in\Gamma$ and $B\not =0$, 
then consider the induced graded projection $x=p\theta: A \rightarrow A(\alpha)\oplus B \rightarrow A(\alpha)$. So $x\in \operatorname{END}(A)_\alpha$. Now define $y=\theta^{-1}i :A(\alpha) \rightarrow A(\alpha)\oplus B \rightarrow A$ which is in  $\operatorname{END}_{R}(A)_{-\alpha}$.  It is easy to see that $xy=1_A$ whereas $yx\not =1_A$ in 
$\operatorname{END}_R(A)$.
\end{proof}

\begin{example}\rm
\label{dell} The following example shows that we do need to take into account
the suspensions of the module $A$ in the Definition~\ref{fgfhr}. We construct a
$\mathbb{Z}$-graded $R$-module $A$ such that $A\cong_{\operatorname{gr}}
A(k)\oplus B$, where $k\not =0$ and $B\not = 0$, $A(n)\ncong
_{\operatorname{gr}} A$ for any $0\not = n \in\mathbb{Z}$. Further we show that if 
$A\cong_{\gr} A \oplus C$ then $C=0$. Indeed we show that
there is a graded decomposition $R\cong_{\operatorname{gr}}R(-1)\oplus R(-1)$,
as graded right $R$-module, however $R \ncong R(n)$ for any $0\not =%
n\in\mathbb{Z}$. For this example, we consider Leavitt's algebra studied
in~\cite{Leavitt}. Consider the free associative $K$-algebra $R$ generated by
symbols $\{x_{i},y_{i} : 1\leq i \leq2\}$ subject to relations
\begin{equation}
\label{jh54320}x_{i}y_{j} =\delta_{ij}, \text{ for all } 1\leq i,j \leq2,
\text{ and } \sum_{i=1}^{2} y_{i}x_{i}=1,
\end{equation}
where $K$ is a field and $\delta_{ij}$ is the Kronecker delta. (This is the Leavitt path algebra associated to the graph with one vertex and two loops). The relations
guarantee the right $R$-module homomorphism
\begin{align}
\label{is329ho}\phi:R & \longrightarrow R^{2}\\
a  & \longmapsto(x_{1}a ,x_{2}a)\nonumber
\end{align}
has an inverse
\begin{align}
\label{is329ho9}\psi:R^{2} & \longrightarrow R\\
(a_{1},a_{2})  & \mapsto y_{1}a_{1}+y_{2}a_{2},\nonumber
\end{align}
so $R\cong R\oplus R$ as a right $R$-module. Thus $R$ is not directly-finite.

Assigning $1$ to $y_{i}$ and $-1$ to $x_{i}$, $1\leq i \leq2$, since the
relations~(\ref{jh54320}) are homogeneous (of degree zero), the algebra $R$ is
a $\mathbb{Z}$-graded algebra. The isomorphism~(\ref{is329ho}) induces a
graded isomorphism
\begin{align}
\label{is329ho22}\phi:R & \longrightarrow R(-1)\oplus R(-1)\\
a  & \mapsto(x_{1}a ,x_{2}a),\nonumber
\end{align}
where $R(-1)$ is the suspension of $R$ by $-1$. However we observe that $R$ is
not graded isomorphic to any nontrivial suspension of itself. For this we need
to use the graded Grothendieck group of $R$ calculated in \cite[\S 3.9.3]{H}. We have
\begin{align*}
K_{0}^{\operatorname{gr}}(R)  & \longrightarrow\mathbb{Z}[1/2]\\
[R(n)]  & \longmapsto2^{n}.
\end{align*}
This shows that $R$ is graded directly-infinite module. Finally if $R\cong_{\gr} R \oplus C$ then $C$ has to be zero as the monoid $V^{\gr}(R)$ is cancellative. 
\end{example}

We will prove that in the case of Leavitt path algebras, graded direct-finiteness coincides with direct-finiteness. However this is not always the case (in fact the counterexample can be chosen from Leavitt's algebras of type different from $(1,n)$). We thank George Bergman for discussion on the construction of a graded directly-finite ring that is not directly-finite which led to the following example. 

\begin{example}\label{bergmanex}
Let $R$ be the Leavitt algebra $L(2,3)$ which is generated by symbols 
$\{x_{i,j},y_{j,i}: 1\leq i \leq 3, 1\leq j \leq 2\}$ subject to the relations $XY=I_3$ and $YX=I_2$, where
\begin{equation} \label{breaktr}
Y=\left( 
\begin{matrix} 
y_{11} & y_{12} & y_{13} \\ 
y_{21} & y_{22}  & y_{23}
\end{matrix} 
\right), \qquad 
X=\left( 
\begin{matrix} 
x_{11} & x_{12} \\ 
x_{21} & x_{22}\\
x_{31} & x_{32}
\end{matrix} 
\right). 
\end{equation} 
For an introduction to such rings see \cite{Bergman} for the universal construction approach and \cite[\S5]{cohn66} for the structure of $L(n,m)$ (which is denoted by $V_{n,m}$). 
The relations (\ref{breaktr}) imply that $R\oplus R\cong R\oplus R\oplus R$. However, we show that $R$ is not isomorphic to a proper direct summand of itself. The monoid of isomorphism classes of finitely generated projective $R$-modules is $\langle u : 2u=3u\rangle$, where $u$ represents the class associated to $R$ (see~\cite[Theorem~5.21]{H-1}). This monoid is isomorphic to the monoid described in the following table. 
\begin{table}[htbp]
\begin{tabular}{l|rclrcl} 
 &      $0$ & $u$ & $v$ \\ \hline 
$0$ & $0$ &$u$ & $v$\\
$u$ &  $u$ &$v$ & $v$\\
$v$ &  $v$ & $v$ & $v$\\
\end{tabular}
\end{table}
One can easily see that if $R$ is isomorphic to a proper direct summand of itself, then $u=u+x$ for some $x\not = 0$ in this monoid, which is not possible. This shows that $R$ is directly-finite. However, as $R\oplus R$ is isomorphic to a proper direct summand of itself, by Proposition~\ref{prop23} (with $\Gamma$ a trivial group), the matrix ring $\Ma_2(R)$ is not directly-finite. Now consider the ring $R$ as a $\mathbb Z$-graded ring concentrated in degree zero (this is not the canonical grading on Leavitt algebras). We have (see Section~\ref{grprem})
\begin{equation*}
\Ma_n(R)(1,2)\cong \End_R\big (R(1)\oplus R(2)\big).
\end{equation*}
We show that the graded $R$-module $R(1)\oplus R(2)$ is graded directly finite (Definition~\ref{jjuuii}). Let
\begin{equation}\label{jhgtgrt}
R(1)\oplus R(2)\cong_{\gr} R(1+n)\oplus R(2+n)\oplus C,
\end{equation}
for some $n\in \mathbb Z$. For any $j\in \mathbb Z$, $(R(1)\oplus R(2))_j=R_{1+j}\oplus R_{2+j}$ is either zero or is one copy of $R$. Since $R$ is not isomorphic to a proper direct summand of itself, 
from Equation~\ref{jhgtgrt} it follows that $C_j=0$, for any $j\in \mathbb Z$, i.e., $C=0$.  Now from Proposition~\ref{prop23}  (with $\Gamma=\mathbb Z$)  it follows that the matrix ring $\Ma_2(R)(1,2)$ is graded directly-finite. So  $\Ma_2(R)(1,2)$ is a graded directly-finite ring that is not directly-finite.
\end{example}

As was shown in the beginning of this section, if a unital ring $R$ is directly-finite, then for any idempotent $u$, $uRu$ is also directly-finite. The following example shows that we can't extend this to the graded directly-finite case and thus justifies the definition of direct-finiteness with respect to a set of homogeneous idempotents. In Theorem~\ref{munclassroom} we show that in fact the case of Leavitt path algebras are quite special and the graded-finiteness (with respect to vertices) implies that the Leavitt path algebra is directly-finite. 

\begin{example}\label{memejh}
Let $F$ be a field and $u,x,y$ are symbols and consider the free unital ring $F\langle u,x,y\rangle$ and the quotient ring 
\[R=F\langle u,x,y\rangle/\langle u^2=u, ux=xu=x,uy=yu=y,xy=u\rangle.\]
Assigning $0$ to $u$, $1$ to $x$ and $-1$ to $y$, since the relations are homogeneous, $R$ becomes a $\mathbb Z$-graded ring. 

We show that this is a unital graded directly-finite ring, however there is a homogeneous idempotent, namely $u$, such that $xy=u$ but $yx\not = u$. 

Using Bergman's diamond machinary~\cite{bergman78} on the generators, one can quickly see that any ambiguity in the relations of the ring can be resolved. Thus $R$ has a basis of the form $y^ix^j$, $u^i$, where $i,j\geq 0$, and $f\in F$. 
This shows that $yx\not =u$. Next we show if $a, b$ are homogeneous and $ab=1$ then $ba=1$. Suppose first $a\in R_l$ where
 $l\not = 0$. Thus $a=\sum f_{i,j} y^ix^j$ and  $b=\sum f_{i',j'} y^{i'}x^{j'}$, where $i\not = j$ and $i'\not = j'$ and $f_i, f_{i'} \in F$.  Therefore $ab$ consists of words of the form $y^kx^k$ and $u^l$. Since these are linearly independent in $R$, their sum can't be $1$. We are left with the case $a\in R_0$. In this case $a=\sum f_{i} y^ix^i+\sum f_{j}u^j+\sum f_{k}$. Notice that all the words in the sum pair-wise commute. Thus if $ab=1$ then $ba=1$ as well. This shows that $R$ is a unital graded directly-finite, but it is not graded directly-finite with respect to the set $S=\{u\}$. Also note that $R$ is not directly finite as 
 $(x+u-1)(y+u-1)=1$ but $(y+u-1)(x+u-1)\not = 1$. 
\end{example}

Next we give an algebraic characterization of graded directly-finite Leavitt path
algebras with respect to the vertices over any arbitrary graph $E$ and, at the same time, give an alternative proof of a theorem of L.
Va\v{s} \cite{V}. The main tool is the subalgebra construction given in \cite{AR}. This
subalgebra construction has turned out to be a very useful tool in making the
\textquotedblleft local-to-global jump\textquotedblright\ while proving a ring
theoretic property of finite character for a Leavitt path algebra $L:=L(E)$ of an
arbitrary graph $E$. This approach has been demonstrated in proving a number
theorems such as the following:

\begin{enumerate}

\item  $L$ is von Neumann regular $\Longleftrightarrow$ $E$ is acyclic
(\cite{AR});

\medskip

\item  Every simple left/right $L$-module is graded
$\Longleftrightarrow$ $L$ is von Neumann regular (\cite{HR});

\medskip

\item  Every Leavitt path algebra $L$ is a graded von Neumann regular ring (\cite{H-2},
\cite{HR});

\medskip

\item Every Leavitt path algebra $L$ is a right/left B\'{e}zout ring
\cite{AMT}. 

\end{enumerate}

Proposition 2 in \cite{AR}, as stated, does not include the additional
properties implied by the subalgebra construction. Indeed, a careful
inspection of the construction in \cite{AR} shows that the morphism $\theta$
in the construction is actually a graded morphism whose image is a graded
submodule of $L$ and it also reveals some properties of cycles.

We include these facts in the following stronger formulation of Proposition 2
of \cite{AR} which we shall be using.

\begin{theorem}
\label{AR} Let $E$ be an arbitrary graph. Then the Leavitt path algebra
$L:=L_{K}(E)$ is a directed union of graded subalgebras $B=A\oplus
K\varepsilon_{1}\oplus$\textperiodcentered\textperiodcentered
\textperiodcentered$\oplus K\varepsilon_{n}$ where $A$ is the image of a
graded homomorphism $\theta$ from a Leavitt path algebra $L_{K}(F_{B})$ to $L$
with $F_{B}$ a finite graph (depending on $B$), the elements $\varepsilon_{i}$
are homogeneous mutually orthogonal idempotents and $\oplus$ denotes a ring
direct sum. Moreover, any cycle $c$ in the graph $F_{B}$ gives rise to a cycle
$c^{\prime}$ in $E$ such that if $c$ has an exit in $F_{B}$ then $c^{\prime}$
has an exit in $E$.
\end{theorem}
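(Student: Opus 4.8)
The plan is to revisit the construction of Proposition~2 in \cite{AR} and track two extra pieces of data that were not made explicit there: the grading on the morphism $\theta$ and the behaviour of cycles under $\theta$. First I would recall the construction: given a finite set of elements of $L$, one writes each as a $K$-linear combination of monomials $\alpha\beta^{\ast}$, collects the finitely many vertices and edges occurring in the $\alpha$'s and $\beta$'s, and builds from this data a finite graph $F_B$ together with a map $\theta\colon L_K(F_B)\to L$ sending vertices to (sums of) vertices and edges to edges of $E$. The idempotents $\varepsilon_1,\dots,\varepsilon_n$ arise as the images under $\theta$ of those vertices of $F_B$ which ``should'' be regular but whose full set of edges was not captured in $F_B$; equivalently they are the elements $v^H$-type corrections needed to make $\theta$ well defined. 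Since $\theta$ is defined on generators by $v\mapsto(\text{homogeneous degree }0)$, $e\mapsto(\text{homogeneous degree }1)$, $e^{\ast}\mapsto(\text{homogeneous degree }-1)$, it automatically respects the $\mathbb{Z}$-grading, so $A=\Im\theta$ is a graded $K$-subspace of $L$, and since $\theta$ is an algebra homomorphism $A$ is a graded subalgebra; the $\varepsilon_i$ are homogeneous of degree $0$ because they are $K$-combinations of the degree-$0$ elements $ee^{\ast}$ and vertices. This already upgrades ``subalgebra'' to ``graded subalgebra'' and gives that $B=A\oplus K\varepsilon_1\oplus\cdots\oplus K\varepsilon_n$ is graded.

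Next I would verify that $L$ is the directed union of such $B$'s. This is exactly as in \cite{AR}: given two finite subsets $S_1,S_2$ of $L$, the subset $S_1\cup S_2$ produces a graph $F_B$ into which the graphs for $S_1$ and for $S_2$ map compatibly, and the corresponding $B$ contains both of the earlier subalgebras; one checks the containments of the $B$'s respect the grading, which is immediate since all the maps involved are degree-preserving. Hence $L=\bigcup B$ is a directed union in the category of graded $K$-algebras.

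For the final clause about cycles, I would argue as follows. A cycle $c=f_1\cdots f_k$ in $F_B$ is a closed path with distinct vertices; its image $\theta(c)=\theta(f_1)\cdots\theta(f_k)$ is a product of edges in $E$. By the way $\theta$ is constructed — edges of $F_B$ are (essentially) edges of $E$ and the range/source maps are inherited — $\theta(c)$ traces out a genuine closed path $c'$ in $E$ based at the vertex $\theta(s(f_1))$; passing to a sub-closed-path one may take $c'$ to be a cycle. Now suppose $c$ has an exit in $F_B$: there is an edge $f\neq f_i$ with $s(f)=s(f_i)$ for some $i$. Its image $\theta(f)$ is an edge of $E$ with the same source as $\theta(f_i)$, and because the construction keeps distinct edges of $F_B$ mapping to distinct edges of $E$ (no edge of $F_B$ is identified with another under $\theta$), $\theta(f)\neq\theta(f_i)$, so $\theta(f)$ is an exit for $c'$ in $E$. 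The main obstacle is the bookkeeping in this last point: one has to look inside the precise recipe of \cite{AR} to be sure that edges are not collapsed and that a ``should-be-regular'' vertex whose edge set was truncated does not accidentally create or destroy an exit — i.e., that the exit detected in $F_B$ genuinely survives in $E$ rather than being an artifact of the truncation. Once the construction is laid out carefully this is a finite, essentially combinatorial check, and it yields the stated strengthening.
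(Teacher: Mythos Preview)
Your proposal is correct and matches the paper's approach exactly: the paper does not give a separate proof of this theorem but simply states, in the paragraph preceding it, that ``a careful inspection of the construction in \cite{AR}'' shows $\theta$ is graded and reveals the stated property of cycles --- precisely the inspection you outline. Your write-up is in fact more detailed than what the paper provides, and the points you flag (that $\theta$ sends generators to homogeneous elements of the correct degree, that the $\varepsilon_i$ are degree-$0$ combinations of $ee^{\ast}$'s and vertices, and that distinct edges of $F_B$ stay distinct in $E$ so exits survive) are exactly the checks that make the strengthening go through.
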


We now give an algebraic description of graded directly-finite Leavitt path
algebras. 

\begin{theorem}\label{munclassroom}
\label{DirFinLPAs} Let $E$ be an arbitrary graph. Then the following
properties are equivalent for $L:=L_{K}(E)$:
\begin{enumerate}[\upshape(a)]
\item $L$ is graded directly-finite with respect to the vertices;
\medskip

\item No cycle in $E$ has an exit;
\medskip

\item $L$ is a directed union of graded semisimple Leavitt path algebras;
specifically, $L$ is a directed union of graded direct sums of matrices of
finite order over $K$ or $K[x,x^{-1}]$ with appropriate matrix gradings.

\medskip 

\item $L$ is directly-finite.
\end{enumerate}
\end{theorem}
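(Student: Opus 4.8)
The plan is to prove the cycle of implications (a)~$\Rightarrow$~(b)~$\Rightarrow$~(c)~$\Rightarrow$~(d)~$\Rightarrow$~(a), leaning on Theorem~\ref{AR} for the local-to-global step and on the explicit graded matrix presentations from Section~\ref{grprem}. The implication (d)~$\Rightarrow$~(a) is essentially free: a directly-finite ring is graded directly-finite with respect to \emph{any} set of homogeneous idempotents, since if $x,y\in L^h$ and $v$ is a vertex with $xv=x=vx$, $yv=y=vy$ and $xy=v$, then $(x+1-v)(y+1-v)=1$ in the unitization (or in $L$ if $L$ is unital), whence $yx=v$ by direct-finiteness; this is exactly the argument already recorded in the text after the definition of directly-finite. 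So the real content is (a)~$\Rightarrow$~(b)~$\Rightarrow$~(c)~$\Rightarrow$~(d).

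For (a)~$\Rightarrow$~(b) I would argue by contraposition: suppose a cycle $c=e_1\cdots e_k$ based at a vertex $v$ has an exit. Then it is classical (and used repeatedly in the Leavitt path algebra literature, e.g.\ via the ``$ccc\cdots$'' construction) that the corner $vLv$ contains a copy of the Leavitt algebra $L(1,\ell)$ for some $\ell\ge 2$; concretely, if $f$ is the exit edge at $s(e_i)$, one builds from $c$ and the path $e_1\cdots e_{i-1}f$ two homogeneous elements $\alpha=c$ (degree $|c|$) and another path $\mu$ of the same degree with $\alpha^*\alpha=v$, $\mu^*\mu=v$, $\alpha^*\mu=0$, which after adjusting gives homogeneous $x,y$ with $xy=v$ but $yx\neq v$ (the standard witnessing of $vLv\cong$ something containing $L(1,\ell)$, which is not directly-finite). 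Because $|x|+|y|=0$, this is a violation of graded direct-finiteness with respect to the vertex $v$. The point to be careful about is producing \emph{homogeneous} $x,y$ of opposite degrees; this is where one uses that a cycle together with its exit yields elements whose degrees are $\pm|c|$, so the failure is detected already at the graded level.

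For (b)~$\Rightarrow$~(c): assuming no cycle in $E$ has an exit, apply Theorem~\ref{AR} to write $L=\bigcup_\lambda B_\lambda$ as a directed union of graded subalgebras $B_\lambda=A_\lambda\oplus K\varepsilon_1\oplus\cdots\oplus K\varepsilon_{n_\lambda}$, with $A_\lambda$ a graded homomorphic image of $L_K(F_{B_\lambda})$ for a finite graph $F_{B_\lambda}$. The ``Moreover'' clause of Theorem~\ref{AR} says that a cycle with an exit in $F_{B_\lambda}$ would produce a cycle with an exit in $E$; since $E$ has none, no cycle in $F_{B_\lambda}$ has an exit. A finite graph in which no cycle has an exit is exactly a disjoint union of acyclic graphs with sinks and comet graphs (after passing to the quotient graph corresponding to the kernel of $\theta$, which is again of this form because graded quotients of such Leavitt path algebras stay in this class), so by the graded isomorphisms~(\ref{five}) and~(\ref{seven}) each $A_\lambda$ — hence each $B_\lambda$ — is a graded direct sum of matrix algebras of finite order over $K$ or $K[x,x^{-1}]$ with the appropriate shift gradings, i.e.\ graded semisimple. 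Thus $L$ is a directed union of graded semisimple Leavitt path algebras, giving (c). The mild technical point here is checking that the relevant graded quotient graph of a ``no cycle has an exit'' finite graph is again of that type and that $\theta$ being graded means $A_\lambda$ really does inherit the graded-matrix decomposition; both follow from the structure theory recalled in Section~\ref{grprem} and from the fact that Condition~(K)-type arguments are not needed — the absence of exits is preserved under these operations.

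Finally (c)~$\Rightarrow$~(d): a graded semisimple ring is in particular a graded direct sum of finite matrix rings over $K$ or $K[x,x^{-1}]$, each of which is a Noetherian ring with no nontrivial idempotents splitting off a proper isomorphic summand — concretely $\Ma_n(K)$ and $\Ma_n(K[x,x^{-1}])$ are directly-finite because they are Noetherian (or because $K[x,x^{-1}]$ is a commutative domain and matrix rings over it are stably finite). Direct-finiteness passes to arbitrary direct sums of rings (with local units) and to directed unions of subrings that are ``full'' on idempotents in the sense supplied by the $B_\lambda$ construction: if $xy=u$ in $L$ with $u$ an idempotent and $ux=xu=x$, $uy=yu=y$, then $x,y,u$ all lie in some $B_\lambda$, and since $B_\lambda$ is directly-finite we get $yx=u$ there, hence in $L$. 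This is the step I expect to require the most care: one must verify that for \emph{any} local unit $u$ of $L$ (not just a vertex) the relevant elements can be captured in a single $B_\lambda$, which is exactly the ``finite character'' feature the subalgebra construction of \cite{AR} was designed to provide, and that direct-finiteness of a directed union of directly-finite subrings with compatible local units indeed holds. Granting that, the cycle of implications closes and the theorem follows.
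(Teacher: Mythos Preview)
Your proposal is correct and follows the same cycle (a)$\Rightarrow$(b)$\Rightarrow$(c)$\Rightarrow$(d)$\Rightarrow$(a) as the paper, with the same use of Theorem~\ref{AR} for (b)$\Rightarrow$(c) and the same ``finite character'' argument for (c)$\Rightarrow$(d).

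The one place where you take a detour is (a)$\Rightarrow$(b). You try to manufacture an $L(1,\ell)$ inside $vLv$ by finding two paths $\alpha=c$ and $\mu=e_1\cdots e_{i-1}f$ of the \emph{same} degree with orthogonal ranges, and you flag the degree-matching as the delicate point. The paper sidesteps this entirely: the witnesses are simply $x=c^{\ast}$ and $y=c$, which are already homogeneous of degrees $\mp|c|$ and satisfy $xy=c^{\ast}c=v$ by CK-1; if graded direct-finiteness with respect to $v$ held, then $cc^{\ast}=v$, and left-multiplying by $f^{\ast}$ (where $f$ is the exit at $v$, after rotating the base of $c$ to the exit vertex) gives $0=f^{\ast}cc^{\ast}=f^{\ast}v=f^{\ast}$, a contradiction. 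So no $L(1,\ell)$ construction or degree-matching is needed at all. Your version can be made to work (e.g.\ by iterating $c$ enough times and padding the exit path), but it is more labor for no gain, and as written the phrase ``after adjusting'' hides exactly the step you yourself identified as delicate.

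For (b)$\Rightarrow$(c), your worry about the quotient graph is handled in the paper more cleanly than you suggest: since each $\Ma_{n_i}(K)(\cdots)$ and $\Ma_{m_j}(K[x^{|c_j|},x^{-|c_j|}])(\cdots)$ is graded \emph{simple}, the graded image $A$ of $L_K(F_B)$ under $\theta$ is automatically a direct sum of a subset of these blocks---no analysis of the quotient graph is required.
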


\begin{proof}
Assume (a). We wish to show that no cycle in $E$ has an exit. Suppose, on the contrary, there is a cycle $c$ based at a vertex
$v$ and has an exit $f$ at $v$. Now both $c$ and $c^{\ast}$ are homogeneous
elements and, by (CK-1) condition, $c^{\ast}c=v$. Then, by hypothesis$_{{}}$,
$cc^{\ast}=v$. Left multiplying the last equation by $f^{\ast}$, we obtain
$0=f^{\ast}cc^{\ast}=f^{\ast}v=f^{\ast}$, a contradiction. Hence no cycle in
$E$ has an exit. This proves (b). 

Assume (b). By Theorem \ref{AR}, $L$ is a directed union of graded subalgebras
$B=A\oplus K\varepsilon_{1}\oplus$\textperiodcentered\textperiodcentered
\textperiodcentered$\oplus K\varepsilon_{n}$, where $A$ is the image of a
graded homomorphism $\theta$ from a Leavitt path algebra $L_{K}(F_{B})$ to $L$
with $F_{B}$ a finite graph depending on $B$. Moreover, any cycle with an exit
in $F_{B}$ gives rise to a cycle with an exit in $E$. Since no cycle in $E$
has an exit, no cycle in the finite graph $F_{B}$ has an exit and so (see Section~\ref{grprem} and (\ref{eight}))
$L_{K}(F_{B})\cong_{\gr}
{\displaystyle\bigoplus\limits_{v_{i}\in X}}
\Ma_{n_{i}}(K)(|\overline{{p^{v_{i}}}}|)\oplus%
{\displaystyle\bigoplus\limits_{w_{j}\in Y}}
\Ma_{m_{j}}(K[x^{|c_{j}|},x^{-|c_{j}|}])(|\overline{{q^{w_{j}}}}|)$, where
$n_{i}$ and $m_{j}$ are positive integers and $X,Y$ are finite sets. Since the
matrix rings $\Ma_{n_{i}}(K)$ and $\Ma_{m_{j}}(K[x,x^{-1}])$ with any matrix grading are graded simple
rings, $A$ and hence $B$ is a direct sum of finitely many matrix rings
of finite order with appropriate matrix gradings over $K$ and/or $K[x,x^{-1}%
]$. This proves (c).

Now (c)$\implies$(d) follows from the known fact that matrix rings $\Ma_{n_{i}}(K)$ and
$\Ma_{m_{j}}(K[x,x^{-1}])$ are directly-finite and finite ring
direct sums of such matrix rings are directly-finite and, by condition
(c), any finite set of elements of $L$ belongs to graded direct sum of finitely
many matrix rings of finite order with appropriate matrix gradings over $K$ or
$K[x,x^{-1}]$. This shows $L$ is directly-finite. 

Obviously (d) implies (a) and this completes the proof.  
\end{proof}

\begin{remark}
The condition that no cycle has an exit appears in study of various other properties of Leavitt path algebras. For example, it is known that the Leavitt path algebra $L_K(E)$ has Gelfand-Kirillov dimension one if and only if $E$ contains at least one cycle, but no cycle in $E$ has an exit.
\end{remark}

\begin{remark} In passing, observe that an arbitrary von Neumann regular ring $R$
need not be directly-finite, as is clear when $R$ is the ring of linear
transformations on an infinite dimensional vector space $V$ over a field $K$.
But if a Leavitt path algebra $L_{K}(E)$ of a graph $E$ is von Neumann regular
then it is always directly-finite due to the fact that $E$ becomes acyclic
(\cite{AR}) and so vacuously satisfies the condition that no cycle in $E$ has
an exit. This also indicates that the ring $R$ of all linear transformations
on an infinite dimensional vector space cannot be realized as the Leavitt path
algebra of a directed graph.
\end{remark}

\noindent Now, we proceed to show that, in a  Leavitt path algebra with identity, the definition of graded direct finiteness with respect to vertices given for Leavitt path algebras without identity is equivalent to the graded direct finiteness defined for Leavitt path algebras with identity. In view of Theorem~\ref{munclassroom}, it is enough if we show that when $L_K(E)$ is a ring with identity and is graded directly finite (that is, for any two homogeneous elements $x,y$, $xy=1$  implies $yx=1$), then no cycle in $E$ has an exit. This is done in the next proposition.

\begin{proposition}
Let $E$ be a graph with finitely many vertices. Then $L(E)$ is graded directly-finite if and only if no cycle in $E$ has an exit. 
\end{proposition}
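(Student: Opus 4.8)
The plan is to prove the equivalence by reducing it to Theorem~\ref{munclassroom} in one direction and by a direct ring-theoretic argument in the other. For the direction ``no cycle has an exit $\Longrightarrow$ $L(E)$ graded directly-finite'': since $E$ has finitely many vertices, $L(E)$ has an identity (namely $\sum_{v\in E^0} v$), and by Theorem~\ref{munclassroom} the condition that no cycle in $E$ has an exit implies that $L(E)$ is a directed union of graded direct sums of finite matrix rings over $K$ or $K[x,x^{-1}]$ with matrix gradings. Any two homogeneous elements $x,y$ with $xy=1$ lie in one such graded semisimple subalgebra $B$ of the directed union, and since $1\in B$ as well (the identity of $L(E)$ is a finite sum of vertices, hence lies in some member of the directed system — or one enlarges $B$ to contain it), $B$ is a unital ring; finite direct sums of matrix rings over $K$ and $K[x,x^{-1}]$ are Noetherian, hence directly-finite, hence graded directly-finite, so $yx=1$. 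Thus $L(E)$ is graded directly-finite.

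For the converse, suppose $L(E)$ is graded directly-finite (in the sense that $xy=1$ implies $yx=1$ for homogeneous $x,y$) and suppose, for contradiction, that there is a cycle $c$ based at a vertex $v$ with an exit $f$, so $s(f)=v$ and $f\neq e_1$ where $c=e_1\cdots e_n$. The elements $c$ and $c^{\ast}$ are homogeneous of degrees $|c|$ and $-|c|$ respectively, and the (CK-1) relations give $c^{\ast}c=v$. Here $v$ is only a local unit, not the identity, so I cannot directly invoke graded direct-finiteness; instead I pass to the corner: set $u=\sum_{w\in E^0} w = 1$, write $x = c + (1-v)$ and $y = c^{\ast} + (1-v)$. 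Since $c v = c = v c$ (as $c$ is a closed path based at $v$, $s(c)=r(c)=v$) and similarly $c^\ast v = c^\ast = v c^\ast$, one computes $xy = c c^\ast + (1-v) = cc^\ast + 1 - v$ — but this is homogeneous only if $cc^\ast$ has degree $0$, which it does. However $xy$ need not equal $1$ unless $cc^\ast = v$, which is exactly what we want to conclude, so this particular $x,y$ is not the right choice. The cleaner route: note $x := c^\ast + (1-v)$ and $y := c + (1-v)$ are homogeneous of degrees $-|c|$ and $|c|$, and $xy = c^\ast c + (1-v) = v + 1 - v = 1$ using $c^\ast c = v$ and $c^\ast(1-v) = 0 = (1-v)c$. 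By graded direct-finiteness, $yx = 1$, i.e. $c c^\ast + (1-v) = 1$, whence $cc^\ast = v$. Now left-multiply $cc^\ast = v$ by $f^\ast$: since $f$ is an exit at $v$ with $f\neq e_1 = $ first edge of $c$, the (CK-1) relations give $f^\ast c = f^\ast e_1 \cdots e_n = 0$, so $0 = f^\ast cc^\ast = f^\ast v = f^\ast$, a contradiction. Hence no cycle in $E$ has an exit.

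The main obstacle to watch is the first direction — ensuring that when $xy=1$, the identity $1$ and both elements $x,y$ genuinely lie in a common member $B$ of the directed union that is itself a unital graded semisimple ring, so that the classical fact ``finite direct sums of matrix rings over $K$ and $K[x,x^{-1}]$ are directly-finite'' can be applied; this is where finiteness of $E^0$ is used, guaranteeing $1$ exists and belongs to the directed system. The converse direction is essentially the same one-line argument as in the proof of (a)$\Rightarrow$(b) in Theorem~\ref{munclassroom}, the only subtlety being the passage from the local unit $v$ to the global identity $1$ via the elements $c^\ast + (1-v)$ and $c + (1-v)$, which is routine. Altogether the proposition follows immediately by combining Theorem~\ref{munclassroom} with this reduction, and in fact could be phrased as: for $E$ with finitely many vertices, graded direct-finiteness and graded direct-finiteness with respect to the vertices coincide, both being equivalent to ``no cycle has an exit''.
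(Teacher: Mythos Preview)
Your argument for the direction ``no cycle has an exit $\Longrightarrow$ graded directly-finite'' is correct and essentially matches the paper: invoke Theorem~\ref{munclassroom} to conclude $L(E)$ is directly-finite, hence graded directly-finite. (You take a slightly more explicit route through the directed union, but the content is the same.)

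The converse, however, contains a genuine error. You claim that $x:=c^{\ast}+(1-v)$ and $y:=c+(1-v)$ are homogeneous of degrees $-|c|$ and $|c|$ respectively. They are not: $c^{\ast}\in L_{-|c|}$ while $1-v=\sum_{w\neq v} w\in L_{0}$, and since $|c|\neq 0$ the element $c^{\ast}+(1-v)$ is a sum of two nonzero homogeneous components of different degrees. Consequently you cannot apply the hypothesis of graded direct-finiteness to the pair $(x,y)$. This is precisely the obstruction that distinguishes graded direct-finiteness from ordinary direct-finiteness: the familiar trick of replacing a local-unit equation $c^{\ast}c=v$ by a global one via $x=c^{\ast}+(1-v)$, $y=c+(1-v)$ destroys homogeneity. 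The paper makes exactly this point in Example~\ref{memejh}, constructing a unital graded directly-finite ring with a homogeneous idempotent $u$ and homogeneous $x,y$ satisfying $xy=u$ but $yx\neq u$.

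The paper circumvents this by a completely different argument. If $E$ has no sources, one chooses for each $v_i\in E^0$ an edge $e_i$ with $r(e_i)=v_i$; then $\sum_i e_i^{\ast}$ and $\sum_i e_i$ \emph{are} homogeneous (of degrees $-1$ and $+1$), and $(\sum_i e_i^{\ast})(\sum_i e_i)=1$ by (CK-1), so graded direct-finiteness forces $(\sum_i e_i)(\sum_i e_i^{\ast})=1$, from which one reads off that each vertex emits a unique edge and no cycle has an exit. If $E$ has a source, one removes it and shows that graded direct-finiteness passes to $L(E_{\backslash v})$ via an explicit lifting of homogeneous elements; iterating reduces to the source-free case. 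The key idea you are missing is producing \emph{globally} homogeneous elements whose product is $1$; this requires using degree-$\pm 1$ elements built from single edges rather than trying to globalise the cycle $c$ itself.
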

\begin{proof}
If cycles in $E$ have no exits, by Theorem~\ref{munclassroom}, $L(E)$ is directly-finite and so it is graded directly-finite. 

Let $L(E)$ be a graded directly-finite ring. Suppose first that $E$ has no sources. Consider $E^0=\{v_1,\dots, v_n\}$. For each $i=1, \ldots, n$, select an edge $e_i$ with $r(e_i)=v_i$. Then 
\[(e_1^*+\cdots+e_n^*)(e_1+\cdots+e_n)=1.\] Thus \[(e_1+\cdots +e_n)(e_1^*+\cdots +e_n^*)=1.\] Expanding this equation, a quick inspection gives that each edge has to emit a unique edge and thus the graph has to be a cycle. 

Next suppose that $E$ has a source $v$. Denote by $E_{\backslash v}$, the graph obtained from $E$ by removing $v$ and all the edges with $v$ as a source. We show that if $L(E)$ is graded directly-finite then so is $L(E_{\backslash v})$. (Note that $E_{\backslash v}$ could consist of disjoint graphs.) Suppose $x\in L(E_{\backslash v})_n$ and $y \in L(E_{\backslash v})_{-n}$, where $n$ is an integer such that $xy=1_{L(E_{\backslash v})}$. If $s^{-1}(v)=\{e_1,\dots,e_k\}$, then 
\[\big ( \sum_{l=1}^k e_lxe_l^* +x\big )\big(\sum_{l=1}^k e_l y e_l^* + y\big)=v+1_{L(E_{\backslash v})}=1_{L(E)}.\] 
Since $L(E)$ is graded directly-finite, we have 
\begin{equation}\label{hhggff}
\big(\sum_{l=1}^ke_l y e_l^* + y\big)\big(\sum_{l=1}^k e_lxe_l^* +x\big)=1_{L(E)}.
\end{equation}
Since $1_{L(E_{\backslash v})} 1_{L(E)}=1_{L(E_{\backslash v})}$, a quick inspection of (\ref{hhggff}) shows that $yx=1_{L(E_{\backslash v})}$. 

Now starting from the graph $E$ and repeatedly removing the sources, we end up with graphs which are isolated vertices or have no sources. In the latter case, the first part guarantees the graphs have to be cycles. Thus no cycle in $E$ has an exit. 
\end{proof}

\section{Leavitt path algebras which are graded $\Sigma$-$V$ rings}

\noindent  A ring $R$ over
which every simple left/right module is injective is called a left/right
\textit{$V$-ring}. Here $V$ comes from the name of Orlando Villamayor. If for every simple left/right $R$-module $S$, direct sums of
arbitrary copies of $S$ are injective, then $R$ is called a left/right $\Sigma$-$V$ \textit{ring}. We need to define a graded version of this notion. 

\begin{definition}
Let $R$ be a $\Gamma$-graded ring. A graded left/right $R$-module $M$ is said to be \textit{graded $\Sigma$-injective} if every direct sum $\bigoplus\limits_{i\in I}M(\alpha_{i})$, where $I$ is an arbitrary index set and $\alpha_{i}\in\Gamma$, is graded injective. The
$\Gamma$-graded ring $R$ is called a \textit{graded left/right $\Sigma$-V ring} if every graded simple left/right $R$-module is
graded $\Sigma$-injective.
\end{definition}

In this section, we obtain a complete description of Leavitt path algebras 
of arbitrary graphs which are graded $\Sigma$-$V$ rings. These turn out be
the  subdirect products of matrix rings of arbitrary size, with finitely
many non-zero entries, over $K$ or $K[x,x^{-1}]$ equipped with appropriate
matrix gradings (Theorem~\ref{siginjecmm} and Corollary~\ref{siginjeccc}).

The next proposition shows that the Leavitt path algebras which are $\Sigma$-$V$ rings form a subclass of the class of graded directly-finite Leavitt path algebras.

\begin{proposition}
\label{grSigmaV=>df}Let $E$ be an arbitrary graph. If $L:=L_{K}(E)$ is a
graded $\Sigma$-$V$ ring, then $L$ is graded directly-finite.
\end{proposition}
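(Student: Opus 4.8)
The plan is to use Theorem~\ref{munclassroom}: since a directly-finite Leavitt path algebra is automatically graded directly-finite, and Theorem~\ref{munclassroom} tells us that $L$ is graded directly-finite with respect to the vertices if and only if no cycle in $E$ has an exit, it suffices to prove that if $L$ is a graded $\Sigma$-$V$ ring then no cycle in $E$ has an exit. So I would argue by contraposition: assume there is a cycle $c$ in $E$ based at a vertex $v$ which has an exit, and derive that some graded simple module fails to be graded $\Sigma$-injective (in fact, I would aim to show it fails even to be graded injective, or that a suitable infinite direct sum of suspensions of it fails to be graded injective).

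First I would recall the structure of graded simple modules attached to a cycle. When $c$ is a cycle with an exit based at $v$, the graded module $N_c := vL / \langle \text{something} \rangle$ — more precisely, one works with the simple module $S$ arising from the infinite rational path $p = ccc\cdots$ (the ``Chen module'' $V_{[p]}$), which is graded since $p$ is a rational path. The key structural fact I would use is that the presence of the exit $f$ at $v$ forces $\operatorname{END}_L(S)$ or the relevant endomorphism ring to behave badly; concretely, the corner $vLv$ contains the copy of $K[x,x^{-1}]$ generated by $c, c^\ast$ together with the extra idempotent $ff^\ast$ coming from the exit, and this is where direct-finiteness breaks. Alternatively, and more in the spirit of the $\Sigma$-injectivity hypothesis, I would note that $\Sigma$-injectivity of all graded simple modules is a strong chain condition — by the graded analogue of the Cartan--Eilenberg--Bass theorem, if every graded simple module is graded $\Sigma$-injective then $L$ satisfies a noetherian-type finiteness on the relevant socle series — and the presence of a cycle with an exit produces an infinite strictly ascending or descending chain that contradicts this.

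The cleanest route, though, is probably the direct one: show that a cycle with an exit produces inside $L$ an infinite set of orthogonal idempotents that are all ``graded-isomorphic after suspension'' in a way that lets one build a non-graded-injective infinite direct sum of copies of a single graded simple module. Specifically, with $c$ based at $v$ and exit $f$ at $v$, one gets $c^\ast c = v$ but $cc^\ast \neq v$ (since $f^\ast cc^\ast = f^\ast v = f^\ast \neq 0$ while $f^\ast v$ would have to vanish if $cc^\ast=v$), so $v$ is graded-isomorphic to the proper summand $cc^\ast L$ of $vL$; iterating with $c^n$ one gets an infinite properly descending chain of graded summands of $vL$, hence $vL$ contains an infinite direct sum of nonzero graded submodules, each isomorphic to a suspension of a fixed graded simple module in the top. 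I would then invoke the fact that graded $\Sigma$-injectivity of that simple module forces such infinite direct sums of its suspensions to be graded injective, and combine this with a Baer-type criterion argument to reach a contradiction with the ascending chain of annihilators produced by the $c^n$.

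The main obstacle I anticipate is the last step: translating ``infinite properly descending chain of graded summands of $vL$'' into a genuine failure of graded $\Sigma$-injectivity of a \emph{graded simple} module, rather than merely a failure of graded direct-finiteness (which would be circular, since that is what we are trying to prove). The honest fix is that we are \emph{not} assuming $L$ is graded directly-finite — we are assuming the stronger-looking but logically independent hypothesis that graded simples are graded $\Sigma$-injective — so I must extract from a cycle-with-exit a configuration that contradicts $\Sigma$-injectivity specifically. I expect the resolution to be that the exit $f$ allows one to embed $\bigoplus_{n\geq 1} S(\alpha_n)$ (for appropriate shifts $\alpha_n$ determined by $|c|$) into a cyclic graded module in a way whose injective hull is not this direct sum, the obstruction being exactly the non-splitting caused by $f$; making this precise — identifying the right simple module, the right shifts, and the embedding — is where the real work lies, and I would look to the companion results on graded injectivity of Chen modules (the papers \cite{AR1}, \cite{HR}) to supply the needed computation of injective hulls.
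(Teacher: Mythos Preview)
Your reduction via Theorem~\ref{munclassroom} is legitimate: since that theorem is proved independently of this proposition, it is fine to argue that a cycle $c$ with an exit $f$ at $v$ would contradict the graded $\Sigma$-$V$ hypothesis. You also correctly identify the key configuration: $c^{\ast}c=v$ but $cc^{\ast}\neq v$, which by iteration yields an infinite direct sum $\bigoplus_{n\geq 1} B_{n}\subseteq vL$ of nonzero graded summands, each $B_{n}$ graded isomorphic to a suspension of $B_{1}=(v-cc^{\ast})L$.

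The gap is in what you do next. The $B_{n}$ are \emph{not} graded simple in general (there is no reason for $(v-cc^{\ast})L$ to be simple), so you cannot immediately invoke $\Sigma$-injectivity of a graded simple module to conclude that $\bigoplus B_{n}$ is graded injective. Your suggested workarounds --- a Baer-type annihilator-chain argument, or computing graded injective hulls of Chen modules --- are headed in the wrong direction and would require substantial machinery not available here.

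The paper's fix is short and worth internalizing. Choose in each $B_{n}$ a maximal graded submodule $M_{n}$, compatibly so that all the simple quotients $B_{n}/M_{n}$ are graded isomorphic. Then
\[
\Big(\bigoplus_{n\geq 1} B_{n}\Big)\Big/\Big(\bigoplus_{n\geq 1} M_{n}\Big)\ \cong_{\gr}\ \bigoplus_{n\geq 1}(B_{n}/M_{n})
\]
is a direct sum of suspensions of a single graded simple module, hence graded injective by the $\Sigma$-$V$ hypothesis. It is therefore a graded direct summand of the \emph{cyclic} module $vL/\bigoplus M_{n}$. But an infinite direct sum of nonzero modules is never cyclic --- contradiction. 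Notice that this argument uses nothing about Leavitt path algebras: the paper in fact runs it starting from arbitrary homogeneous $x,y$ with $xy=v\neq yx$, so the detour through ``cycle with an exit'' and Theorem~\ref{munclassroom} is unnecessary.
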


\begin{proof}
Suppose, on the contrary, $L$ is not graded directly-finite. Then there exists
homogeneous elements $x,y$ and a local unit $v$ such that $xy=v$ but $yx\neq
v$ where $v=v^{2}$ satisfies $vx=x=xv$, $vy=y=yv$. Since $yx$ is a homogeneous
idempotent, we have a decomposition
\begin{equation}\label{mnji}
vL=yxL\oplus(v-yx)L=yL\oplus(v-yx)L, 
\end{equation}
where $yxL=yL$ due to the fact that $yL=yvL=yxyL\subseteq yxL$. Now the map
$va\longmapsto yva=ya$ gives rise to a graded isomorphism $vL\longrightarrow
yL(|y|)$. It now follows from (\ref{mnji}) that $vL\cong_{\gr}A_{1}\oplus_{\gr}B_{1}$, where $A_{1}=yL(|y|)$ and
$B_{1}=(v-yx)L$. As $A_1\cong_{\gr}vL$, we have a graded decomposition
$A_{1}=A_{2}\oplus_{\gr}B_{2}$ such that $A_{2}\cong_{\gr}A_{1}$ and $B_{2}%
\cong_{\gr}B_{1}$. Apply the same arguments to $A_{2}$ to get a graded
decomposition $A_{2}=A_{3}\oplus_{gr}B_{3}$ where $A_{3}\cong_{\gr}A_{2}$ and
$B_{3}\cong_{\gr}B_{2}$. Proceeding like this, we obtain infinitely many
independent graded direct summands of $vL$, $B_{1}\cong_{\gr}B_{2}\cong%
_{\gr}\cdot\cdot\cdot\cong_{\gr}B_{n}\cong_{\gr}\cdots$. For each $i$,
choose a maximal graded $L$-submodule $M_{i}$ in $B_{i}$ in such a way that
$B_{i}/M_{i}\cong_{\gr}B_{j}/M_{j}$ for all $i,j$. Now 
\[(\bigoplus\limits_{n=1}^{\infty}B_n)/(\bigoplus\limits_{n=1}^{\infty}M_n)\cong_{\gr} \bigoplus\limits_{n=1}^{\infty}(B_n/M_n)\]
 is a graded direct sum of isomorphic graded simple $L$-modules,
so it is graded injective, as $L$ is a graded $\Sigma$-$V$ ring. Consequently, $(\bigoplus\limits_{n=1}^{\infty}B_n)/(\bigoplus\limits_{n=1}^{\infty}M_n)$ is a graded direct summand of the graded cyclic module 
$vL/(\bigoplus\limits_{n=1}^{\infty}M_n)$. This means that $\bigoplus \limits_{n=1}^{\infty}(B_{n}/M_{n})$ is a cyclic module, a contradiction. Hence $L$ is graded directly-finite. 
\end{proof}

However, if $L_{K}(E)$ is graded directly-finite, it need not be a graded 
$\Sigma$-$V$ ring as the next example shows.

\begin{example}
\label{gradPrimitive} Consider the following graph $E$, where $E^{0}%
=\{w,v_{1},v_{2},v_{3},\cdots\}$ and
$E^{1}=\{e_{1},e_{2},e_{3},\cdots$; $f_{1},f_{2}%
,f_{3},\cdots$; $g\}$. Further, for all $n\geq1$, $s(e_{n}%
)=v_{n}=s(f_{n})$; $r(e_{n})=v_{n+1}$ ; $r(f_{n})=w$ and $s(g)=w=r(g)$. Thus the graph looks like

 \begin{equation*}
\begin{array}{ccccc}
 \xymatrix{
v_1\ar@{->}[r]^{e_1}\ar@{->}[rrd]^{f_1}& v_2\ar@{->}[r]^{e_2}\ar@{->}[rd]^{f_2}&v_3\ar@{->}[r]^{e_3}\ar@{->}[d]^{f_3}&v_4\ar@{->}[r]^{e_4}   \ar@{->}[ld]^{\,\,\,\,f_4}&\ldots \ar@{.>}[dll]& \\
&&w\ar@(dl,dr)_{g}&&&
}
\end{array}
\end{equation*}

\noindent Clearly $L_{K}(E)$ is graded directly-finite, as the only loop $g$ in $E$ has no
exit. We claim that $L_{K}(E)$ is not a graded $\Sigma$-$V$ ring. To see
this, let $I$ be the graded ideal generated by vertex $w$. By Proposition
3.7 of \cite{AAPS}, $I\cong _{gr}M_{\infty }(K[x,x^{-1}])$ and so $I$ is a
graded direct sum isomorphic graded simple right ideals $S$ of $I$, all
isomorphic to $K[x,x^{-1}]$. Since $I$ is a graded ideal, each such right
ideal $S$ of $I$ is also a graded simple right $L_{K}(E)$-ideal. If $L_{K}(E)
$ were a graded right $\Sigma$-$V$ ring, $I$ will be an injective right ideal
of $L_{K}(E)$ and hence a direct summand of $L_{K}(E)$. But this is a
contradiction, since $v_{i}\geq w$ for all the vertices $v_{i}$ implies, by
Proposition 2.7.10 in \cite{AAS-1}, that $I$ is an essential right ideal of $%
L_{K}(E)$. Thus $L_{K}(E)$ is not a graded $\Sigma$-$V$ ring.
\end{example}

\bigskip 

Before we proceed further, the following observations about prime and
primitive ideals of graded rings may be worth noting:

\noindent Recall that a graded ideal $P$ in a $\Gamma $-graded ring $R$ is graded
prime if for any two homogeneous elements $x,y\in R$, $xRy\subseteq R$
implies $x\in R$ or $y\in R$. A graded ideal $P$ of $R$ is said to be graded
left/right primitive if $P$ is the annihilator of a graded simple left/right 
$R$-module. If $\Gamma $ is an ordered group, then it is known (see
Proposition 1.4, Chapter II, \cite{NvO}) that a graded ideal $P$ of a $%
\Gamma $-graded ring $R$ is graded prime if and only if $P$ is prime. A
similar statement for graded primitive ideals is no longer true. It may
happen that a graded ideal of such a $\Gamma $-graded ring may be graded
primitive, but not primitive. Indeed, consider the graph $E$ in Example \ref%
{gradPrimitive} above. Now $E$ does not satisfy Condition (L), as it
contains the loop $g$ without exits and $E^{0}$ is downward directed. Hence,
by Theorem 4.3, \cite{R-1}, $\{0\}$ is not a primitive ideal and $L_{K}(E)$
is not a primitive ring. We claim that the graded ideal $\{0\}$ is a graded
primitive ideal and so$\ L_{K}(E)$ is a graded primitive ring. To see this,
consider the graded ideal $I$ generated by $w$. As noted in Example \ref%
{gradPrimitive}, $I$ is a graded direct sum isomorphic graded simple right
ideals $S$ of $L_{K}(E)$. Now the annihilator ideal of $S$ in $L_{K}(E)$ is $%
\{0\}$ due to the fact that $I$ is the only non-zero proper (graded) ideal
of $L_{K}(E)$ and $SI=S\neq 0$. Thus $L_{k}(E)$ is a graded primitive ring.

In order to prove our main theorem (Theorem \ref{main Th}), we shall state and prove a series of lemmas and propositions. We first give a graded version of a definition given in \cite{AAPS}. Recall that we will work with unital modules, i.e.,  $MR=M$ for a right (graded) $R$-module $M$.

\begin{definition}
A graded ring $R$ is said to be categorically graded left/right artinian if
every finitely generated graded left/right $R$-module is graded artinian.
\end{definition}

\begin{lemma}\label{munitit}
\label{CatgrArtinian} A Leavitt path algebra $L:=L_K(E)$ of an arbitrary graph $E$ is
categorically graded left (right) artinian if and only if for every vertex $v$,
$Lv$ ($vL$) is graded left (right) artinian.
\end{lemma}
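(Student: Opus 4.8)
The statement is an ``if and only if'' where the forward direction (categorically graded artinian $\Rightarrow$ each $vL$ graded artinian) is trivial: since $L$ has local units, each $vL$ is a finitely generated (indeed cyclic) graded right $L$-module, hence graded artinian by hypothesis. So the real content is the converse: assuming every $vL$ is graded right artinian, I must show that an arbitrary finitely generated graded right $L$-module $M$ is graded artinian. The plan is to reduce the general finitely generated case to a finite direct sum of modules of the form $vL$ up to suspension, and then invoke that the class of graded artinian modules is closed under finite direct sums, suspensions, quotients, and submodules.

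First I would observe that since $L$ has homogeneous local units (being a Leavitt path algebra, $L$ is generated as a $K$-algebra by the vertices, which are homogeneous orthogonal idempotents, and $E^0$ furnishes local units), any finitely generated graded right $L$-module $M$ is generated by finitely many homogeneous elements $m_1,\dots,m_k$, and there is a homogeneous idempotent $u = v_{i_1}+\cdots+v_{i_t}$ (a sum of distinct vertices) with $m_j u = m_j$ for all $j$. Each homogeneous generator $m_j$ of degree $\gamma_j$ then yields a graded surjection $L(-\gamma_j)\, u \twoheadrightarrow m_j L$, $a \mapsto m_j a$ — more precisely, right multiplication by $m_j$ gives a graded $L$-module map $uL(\gamma_j) \to M$ whose image is $m_jL = m_jL$, so that $M$ is a graded quotient of $\bigoplus_{j=1}^k uL(\gamma_j)$. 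Now $uL = \bigoplus_{\ell} v_{i_\ell} L$ is a finite direct sum of the modules $v_{i_\ell}L$, each graded artinian by hypothesis; a finite direct sum of graded artinian modules is graded artinian, a suspension of a graded artinian module is graded artinian, and a graded quotient of a graded artinian module is graded artinian. Hence $M$ is graded artinian, which is exactly ``categorically graded right artinian.'' The left-module case is symmetric, using $Lv$.

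The step requiring the most care is the bookkeeping around local units and suspensions: one must be sure that the finitely many homogeneous generators can genuinely be absorbed into a single homogeneous idempotent $u$ (this is where ``homogeneous local units,'' discussed in Section~\ref{grprem}, is used rather than mere local units), and that right multiplication by a homogeneous element $m_j$ of degree $\gamma_j$ is a \emph{graded} module homomorphism $uL(\gamma_j)\to M$ — i.e., it shifts degrees correctly, sending $(uL(\gamma_j))_\delta = (uL)_{\gamma_j+\delta} \ni a$ to $m_j a \in M_{\gamma_j + \delta + \delta'}$... one has to track that the domain's grading shift matches. Concretely, $m_j L \subseteq M$ is the graded submodule generated by $m_j$, and the map $L \to m_jL$, $a\mapsto m_j a$, is graded of degree $\gamma_j$, hence induces an honest graded isomorphism onto its image after suspending the source by $\gamma_j$; restricting to $uL$ (which is legitimate since $m_j = m_j u$) gives the claim. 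Everything else is the routine closure property that graded artinian modules form a class closed under the relevant operations, which I would cite from \cite{NvO} or \cite{H}. I do not expect any genuine obstacle beyond getting these suspension indices consistent.
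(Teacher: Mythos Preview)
Your proposal is correct and follows essentially the same argument as the paper: exhibit an arbitrary finitely generated graded module $M$ as a graded quotient of a finite direct sum of suspensions of the modules $vL$ (equivalently $Lv$), and then use closure of graded artinian modules under suspensions, finite direct sums, and quotients. The paper's proof is terser (it simply asserts $M$ is a graded homomorphic image of $\bigoplus_{i\in I} Lv_i(\alpha_i)$ for a finite $I$), while you spell out the role of homogeneous local units and the suspension bookkeeping, but the substance is the same.
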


\begin{proof}
Suppose for every vertex $v$, $Lv$ is a graded left 
artinian $L$-module. Since $L=\bigoplus\limits_{v\in E^{0}}Lv$, every finitely generated graded left $L$-module $M$ will be a
graded homomorphic image of $\bigoplus\limits_{i\in I}Lv_{i}(\alpha_i)$, where $I$ is some finite set and $\alpha_i \in \Gamma$. Since $Lv$  is graded left artinian, so is any shifting of $Lv$. This implies $\bigoplus\limits_{i\in I}Lv_{i}(\alpha_i)$ is graded left artinian and so is its homomorphic image $M$. Hence $L$ is categorically
graded left artinian. The proof for the right artinian case  is similar. 
\end{proof}

\noindent A graded ring $R$ is said to be categorically graded left/right noetherian if
every finitely generated graded left/right $R$-module is graded noetherian. A similar proof shows that the Lemma \ref{munitit} holds for categorically left/right noetherian Leavitt path algebras as well.

We next state a graded version of a known property of right ideals in von
Neumann regular rings.

\begin{lemma}
\label{orthoIdemp} Let $R$ be a graded von Neumann regular ring. If $A$ is a
countably generated but not a finitely generated graded right ideal of $R$,
then $A=\bigoplus\limits_{n\geq1}e_{n}A$ where the $e_{n}$ are homogeneous orthogonal idempotents.
\end{lemma}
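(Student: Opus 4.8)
The plan is to mimic the standard ungraded argument that a countably generated right ideal in a von Neumann regular ring is a direct sum of principal right ideals generated by orthogonal idempotents, being careful that at each stage the idempotents produced can be chosen homogeneous. First I would fix a countable generating set $a_1, a_2, \ldots$ of homogeneous elements for $A$; this is possible because $A$ is a graded right ideal and each generator decomposes into its homogeneous components, so the (countably many) homogeneous components of a countable generating set still generate $A$. Set $A_n = a_1 R + \cdots + a_n R$, an ascending chain of finitely generated graded right ideals with $A = \bigcup_n A_n$. The key input is that in a \emph{graded} von Neumann regular ring every finitely generated graded right ideal is generated by a homogeneous idempotent: indeed $R$ graded von Neumann regular means $R_0$ is von Neumann regular and each finitely generated graded right ideal is a direct summand of $R$ generated by a homogeneous idempotent (this is the graded analogue of the ungraded fact and is available from the references cited for graded von Neumann regular rings). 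So write $A_n = f_n R$ with $f_n$ a homogeneous idempotent; since $A_n \subseteq A_{n+1}$ we may, after replacing $f_{n+1}$ by $f_{n+1} f_n$ on the appropriate side, arrange $f_n f_{n+1} = f_n = f_{n+1} f_n$, i.e. the $f_n$ form an increasing sequence of homogeneous idempotents.

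Next I would set $e_1 = f_1$ and $e_{n+1} = f_{n+1} - f_n$ for $n \geq 1$. Each $e_n$ is homogeneous of degree $0$ (a difference of homogeneous idempotents of degree $0$), and a direct computation using $f_n f_{n+1} = f_n = f_{n+1} f_n$ shows the $e_n$ are idempotent and pairwise orthogonal, with $e_1 + \cdots + e_n = f_n$. Then $f_n R = e_1 R \oplus \cdots \oplus e_n R$ (a graded direct sum, since the $e_i$ are homogeneous), and taking the union over $n$ gives $A = \bigcup_n f_n R = \bigoplus_{n \geq 1} e_n R$. Finally, to see this sum is all of $A$ and not a proper submodule: any element of $A$ lies in some $A_n = f_n R = e_1 R \oplus \cdots \oplus e_n R$, so it is a finite sum of elements of the $e_i R$; and the fact that $A$ is \emph{not} finitely generated guarantees the chain $A_1 \subsetneq A_2 \subsetneq \cdots$ does not stabilize, so infinitely many of the $e_n$ are nonzero (after discarding any that happen to vanish and relabelling, we get a genuine infinite family). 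This yields $A = \bigoplus_{n \geq 1} e_n A$ as claimed, noting $e_n A = e_n R$ since $e_n \in A$ and $A$ is a right ideal.

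The main obstacle I anticipate is purely bookkeeping rather than conceptual: ensuring that the ``lifting'' step — passing from the chain of finitely generated graded right ideals $A_n$ to a \emph{compatible} chain of homogeneous idempotents $f_n$ with $f_n f_{n+1} = f_n = f_{n+1} f_n$ — genuinely works in the graded category. The point to verify is that if $f R \subseteq g R$ with $f, g$ homogeneous idempotents, then $g' := f + g(1-f)$ (or $gf$, depending on the side convention for right ideals) is again a homogeneous idempotent generating the same right ideal as $g$ and satisfying the compatibility with $f$; this is a finite computation with homogeneous elements and the only subtlety is keeping track of which one-sided products vanish. Everything else — homogeneity of the $e_n$, orthogonality, the telescoping identity $e_1 + \cdots + e_n = f_n$, and the passage to the union — is routine once that step is in place. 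I would also remark that the hypothesis ``countably generated but not finitely generated'' is exactly what is needed: countable generation gives the countable exhausting chain, and the failure of finite generation is what forces infinitely many nonzero $e_n$.
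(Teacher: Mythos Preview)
Your proposal is correct and follows essentially the same approach as the paper: exhaust $A$ by a strictly increasing chain of finitely generated graded right ideals $A_n$, each of which is generated by a homogeneous idempotent in a graded von Neumann regular ring, and then peel off successive complements. The paper phrases the key step as ``$A_{n+1} = A_n \oplus C_{n+1}$ is a graded decomposition'' rather than your explicit telescoping $e_{n+1} = f_{n+1} - f_n$, but these are the two standard formulations of the same argument; if anything, your version is slightly more careful about establishing the orthogonality of the $e_n$.
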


\begin{proof}
We can easily write $A$ as the union of a strictly ascending chain of finitely
generated graded right ideals $A_{1}\subset A_{2}\subset\cdots\subset A_{n}\subset\cdots$. Since $R$ is graded von
Neumann regular, each $A_{n}$ is a principal ideal generated by a
homogeneous idempotent, say $e$ (\cite{H-2}). We claim that $A_n=eR$ is a graded direct summand of $R$. 
Let $B=\{x-ex:x\in R\}$. We claim that $B$ is a graded right ideal of $R$.
Let $0\neq a\in B$ and suppose $a=a_{n_{1}}+\cdot \cdot \cdot +a_{n_{k}}$ is
a homogeneous decomposition $a$, where each $a_{n_{i}}$ is a homogeneous
element in $R_{n_{i}}$, $n_{i}\in \Gamma $. Then $0=ea=ea_{n_{1}}+\cdot
\cdot \cdot +ea_{n_{k}}$ is a homogeneous sum and so, by the the
independence of the subgroups $R_{n_{i}}$, $ea_{n_{i}}=0$ for all $i=1,\cdot
\cdot \cdot ,k$. Clearly then $a_{n_{i}}=a_{n_{i}}-ea_{n_{i}}\in B$. Thus $B$
is a graded ideal and clearly $eR\oplus B=R$ is a graded decomposition of $R$.

For each
$n$, let $A_{n+1}=A_{n}\oplus C_{n+1}$ be a graded decomposition of $A_{n+1}$.
It is then clear that $A=A_{1}\oplus
{\displaystyle\bigoplus\limits_{n\geq1}}
C_{n+1}$. Now $A_{1}$ and, for $n\geq1$, $C_{n+1}$ are all finitely generated
graded right ideals of $R$ and so $A_{1}=e_{1}R$ and, for all $n\geq1$,
$C_{n+1}=e_{n+1}R$, where $e_{1}, e_2,\cdots$ are all homogeneous idempotents. We then conclude that $A={\displaystyle\bigoplus\limits_{n\geq1}}
e_{n}A$, as desired.
\end{proof}

Next, we proceed to prove the graded version of Lemma 6.17 from \cite{G} for a ring with homogeneous local units. 

\begin{lemma} $($Lemma 2.9 \cite{ARS}$)$ \label{basic1} Suppose $R$ is a $\Gamma $-graded ring
with homogeneous local units. For any graded left $R$-module $M$, the map $\sum_{i=1}^{n}r_{i}\otimes m_{i}\longmapsto \sum_{i=1}^{n}r_{i}m_{i}$, where the $r_{i}$ and the $m_{i}$ are
homogeneous elements, induces a graded isomorphism $R\otimes
_{R}M\longrightarrow M$.
\end{lemma}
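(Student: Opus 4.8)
The plan is to mirror the classical proof of Lemma 6.17 in~\cite{G}, replacing ordinary local units by homogeneous local units and checking that every map in sight is degree-preserving. Define $\mu\colon R\otimes_R M\to M$ on homogeneous tensors by $\mu(r\otimes m)=rm$; this is well defined and $R$-linear by the usual tensor-product universal property, and it is graded because $\deg(rm)=\deg(r)+\deg(m)=\deg(r\otimes m)$ whenever $r,m$ are homogeneous, so $\mu$ carries the $\gamma$-component of $R\otimes_R M$ into $M_\gamma$. Surjectivity of $\mu$ is immediate from the hypothesis: every homogeneous $m\in M$ has a homogeneous idempotent $e\in R$ (a homogeneous local unit for $m$) with $em=m$, hence $m=\mu(e\otimes m)\in\operatorname{Im}\mu$, and homogeneous elements span $M$.

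For injectivity I would construct an explicit inverse. First I would observe that every element $\xi\in R\otimes_R M$, being a finite sum of homogeneous tensors $\sum_i r_i\otimes m_i$, can be rewritten using a single homogeneous local unit: choose a homogeneous idempotent $e$ with $e r_i=r_i$ for all $i$ (this exists because $R$ has homogeneous local units and the $r_i$ form a finite set of homogeneous elements), so that $\xi=\sum_i e r_i\otimes m_i=e\otimes\big(\sum_i r_i m_i\big)=e\otimes\mu(\xi)$. The key point here is that the tensor is over $R$, so $e r_i\otimes m_i=e\otimes r_i m_i$, and then linearity in the left slot collapses the sum. Consequently the map $m\mapsto e\otimes m$ (for any homogeneous local unit $e$ of the relevant finitely many homogeneous components) is a two-sided inverse of $\mu$ at the level of elements, which gives injectivity: if $\mu(\xi)=0$ then $\xi=e\otimes\mu(\xi)=e\otimes 0=0$.

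The one subtlety, and the step I expect to require the most care, is the well-definedness and independence-of-choice in the rewriting $\xi=e\otimes\mu(\xi)$: one must check that the resulting expression does not depend on which homogeneous local unit $e$ is chosen (if $e,e'$ both fix all the $r_i$, then $e\otimes\mu(\xi)=e e'\otimes\mu(\xi)=e'\otimes\mu(\xi)$ after possibly enlarging to a common homogeneous idempotent dominating both, using that $R_0$ has local units), and that all idempotents involved can indeed be taken homogeneous of degree $0$ — this is exactly the content of the hypothesis ``homogeneous local units'' as restated in the Preliminaries ($R_0$ has local units and $R_0 R_\gamma=R_\gamma R_0=R_\gamma$). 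Everything else is a routine diagram chase. I would close by noting that the same argument, read on the right, gives the analogous graded isomorphism $M\otimes_R R\cong_{\gr} M$ for graded right modules, which is the form invoked later in the paper.
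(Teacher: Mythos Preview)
Your argument is correct and is essentially the same as the paper's proof: both establish surjectivity from $RM=M$ and prove injectivity by picking a homogeneous local unit $\epsilon$ with $\epsilon r_i=r_i$ and rewriting $\sum_i r_i\otimes m_i=\epsilon\otimes\sum_i r_i m_i$. The extra discussion you give about independence of the choice of $\epsilon$ and about building an explicit inverse is not needed for the bare injectivity claim (once $\xi=e\otimes\mu(\xi)$ you are done), but it does no harm.
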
  

\begin{proof} Since $RM=M$, the given map induces an epimorphism. Suppose $%
\sum_{i=1}^{n}r_{i}m_{i}=0$. Let $\epsilon $ be a homogeneous local
unit in $R$ such that $\epsilon r_{i}=r_{i}=r_{i}\epsilon $ for all $%
i=1,\cdot \cdot \cdot ,n$. Then $\sum_{i=1}^{n}r_{i}\otimes
m_{i}=\sum_{i=1}^{n}\epsilon r_{i}\otimes
m_{i}=\sum_{i=1}^{n}\epsilon \otimes r_{i}m_{i}=\epsilon \otimes
\sum_{i=1}^{n}r_{i}m_{i}=\epsilon \otimes 0=0$. Since tensor product
commutes with direct sums (of homogeneous components), this induces a graded
isomorphism.
\end{proof}

We also need the following Lemma (see also~\cite[49.5]{wisbauer}). 

\begin{lemma} \label{basic2} Suppose $R$ is a $\Gamma $-graded ring with homogeneous local units.
Suppose $M$ is a graded flat left $R$-module. Then, for any graded right
ideal $J$ of $R$, the map $\sum_{i=1}^{n}j_{i}\otimes
m_{i}\longmapsto $ $\sum_{i=1}^{n}j_{i}m_{i}$ where the $j_{i},m_{i}$
are homogeneous elements respectively in $J$ and $M$, induces a graded
isomorphism  $J\otimes _{R}M\longrightarrow JM$.
\end{lemma}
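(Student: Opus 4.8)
The plan is to mimic the standard (ungraded) argument that shows flatness is inherited by the functor $J\otimes_R{-}$, with the multiplication map $J\otimes_R M\to JM$ arising from a short exact sequence and the long exact Tor-sequence, while being careful at every step to stay inside the graded category. First I would consider the canonical short exact sequence of graded right $R$-modules
\begin{equation*}
0\longrightarrow J\stackrel{\iota}{\longrightarrow} R\longrightarrow R/J\longrightarrow 0,
\end{equation*}
where $\iota$ is the (homogeneous) inclusion and all maps are graded of degree zero. Since $M$ is graded flat, tensoring on the right with $M$ (in the graded sense) keeps the sequence exact, so the induced map $J\otimes_R M\to R\otimes_R M$ is a graded monomorphism. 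Composing with the graded isomorphism $R\otimes_R M\xrightarrow{\ \sim\ } M$ of Lemma~\ref{basic1}, the image of $J\otimes_R M$ is exactly $JM$, and the composite sends $\sum_i j_i\otimes m_i\mapsto\sum_i j_i m_i$. Thus the stated map $J\otimes_R M\to JM$ is a graded isomorphism once one checks that this composite really is the map described; that check is immediate from the explicit formula in Lemma~\ref{basic1} together with functoriality of $\otimes_R$ applied to $\iota$.

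The one point requiring genuine care — and the step I expect to be the main obstacle — is justifying that "graded flat" does what we want, i.e. that $-\otimes_R M$ is exact \emph{on the category of graded modules} with the graded tensor product (the direct sum of homogeneous components, exactly as used in Lemma~\ref{basic1}). Because $R$ has only homogeneous local units, one cannot appeal verbatim to the unital-ring definition of flatness; instead one should take the working definition that $M$ is graded flat if $-\otimes_R M$ preserves monomorphisms of graded modules, and note (as in the cited \cite[49.5]{wisbauer} and the local-units setup of Lemma~\ref{basic1}) that for modules over rings with local units this is equivalent to the ordinary flatness of $M$ viewed without grading, since the forgetful functor to ungraded modules is exact and faithful and the graded tensor product agrees with the ungraded one on the underlying abelian groups. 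Concretely, the homogeneous-local-unit hypothesis is what guarantees $RM=M$ and hence that Lemma~\ref{basic1} applies, giving the graded identification $R\otimes_R M\cong M$ that anchors the whole argument.

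To finish, I would spell out the diagram
\begin{equation*}
\begin{CD}
J\otimes_R M @>{\iota\otimes 1}>> R\otimes_R M\\
@VVV @VV{\cong}V\\
JM @>>> M,
\end{CD}
\end{equation*}
in which the top arrow is a graded monomorphism by graded flatness, the right vertical arrow is the graded isomorphism of Lemma~\ref{basic1}, the bottom arrow is the graded inclusion, and the left vertical arrow is the map of the statement. Commutativity is checked on homogeneous simple tensors $j\otimes m$: both routes produce $jm\in JM\subseteq M$. Since three of the four maps are graded isomorphisms onto their images and the square commutes, the remaining map $J\otimes_R M\to JM$ is a graded isomorphism, and it is exactly $\sum_i j_i\otimes m_i\mapsto\sum_i j_i m_i$ as claimed. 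No further computation is needed beyond these routine verifications.
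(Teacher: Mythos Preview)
Your proposal is correct and follows essentially the same approach as the paper: use graded flatness of $M$ to see that $J\otimes_R M \to R\otimes_R M$ is injective, invoke Lemma~\ref{basic1} to identify $R\otimes_R M$ with $M$, and observe that the image of the composite is $JM$. The paper's proof is terser (it omits the quotient $R/J$, the diagram, and the discussion of what graded flatness means over rings with local units), but the argument is the same.
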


\begin{proof} Note that $JM$ is the subgroup of $M$ generated by $%
\{\sum_{i=1}^{n}j_{i}m_{i}\in M:j_{i}\in J_{\alpha _{i}},m_{i}\in
M_{\beta _{i}},\alpha _{i}, \beta _{i}\in \Gamma \}$. Since $_{R}M$ is flat,
we get an exact sequence 
\[
0\longrightarrow J\otimes _{R}M\longrightarrow R\otimes _{R}M.
\]%
On the other hand by Lemma \ref{basic1} $R\otimes _{R}M \cong M$.  Since the image of the composite map
is $JM$, the stated map is a graded isomorphism $J\otimes
_{R}M\longrightarrow JM$.
\end{proof}

\begin{proposition} \label{6.17} Suppose $\phi :R\longrightarrow S$ is a graded epimorphism between two $\Gamma $%
-graded rings with homogeneous local units and suppose $S$ is graded flat as a graded
left $R$-module. If $A$ is a graded injective right $S$-module, then $A$ is
also graded injective as a right $R$-module.
\end{proposition}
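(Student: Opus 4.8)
The statement is the graded analogue of the classical fact (Lemma 6.17 in Goodearl's book) that if $S$ is a flat left $R$-module and $R \to S$ is a ring epimorphism, then $S$-injectivity implies $R$-injectivity. I would prove it by the Baer-type criterion: to show a graded right $R$-module $A$ is graded injective, it suffices (by the graded Baer criterion, valid for rings with homogeneous local units) to extend every graded $R$-homomorphism $f : J \to A$ from a graded right ideal $J$ of $R$ to all of $R$, allowing the usual degree shifts. So fix a graded right ideal $J \leq R$ and a graded homomorphism $f : J \to A$ (we may assume $f$ is homogeneous of some degree, and argue componentwise, or just carry a shift throughout).

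First I would pass to $S$ via the tensor functor. Since $S$ is graded flat as a left $R$-module and $J$ is a graded right ideal, Lemma~\ref{basic2} gives a graded isomorphism $J \otimes_R S \xrightarrow{\ \sim\ } JS$, where $JS$ is a graded right ideal of $S$; and Lemma~\ref{basic1} identifies $R \otimes_R S$ with $S$. Meanwhile, because $A$ is a graded right $S$-module (and $\phi$ is an epimorphism of rings with homogeneous local units, so $AS = A$), the hom-tensor adjunction in the graded setting gives a graded isomorphism $\operatorname{HOM}_R(M, A) \cong \operatorname{HOM}_S(M \otimes_R S, A)$ for graded right $R$-modules $M$, naturally in $M$. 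Applying this to $M = J$ and to $M = R$, and using the identifications above, the map $f \in \operatorname{HOM}_R(J,A)$ corresponds to a graded $S$-homomorphism $\widehat{f} : JS \to A$. Now $A$ is graded injective as an $S$-module, so $\widehat{f}$ extends to a graded $S$-homomorphism $g : S \to A$. Restricting $g$ along $\phi$ (precomposing with $R \to S$ and the inclusion of the relevant shifts) yields a graded $R$-homomorphism $\widetilde{f} : R \to A$, and chasing the adjunction and flatness isomorphisms back shows $\widetilde{f}$ restricts to $f$ on $J$. By the graded Baer criterion, $A$ is graded injective as a right $R$-module.

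The step I expect to be the main obstacle is making the graded hom-tensor adjunction and the naturality square genuinely work with the $\operatorname{HOM}$ (not $\operatorname{Hom}$) groups and with the suspensions built into the definition of graded injectivity: one has to be careful that $\operatorname{HOM}_R(J,A)$ really is $\bigoplus_\gamma \operatorname{Hom}(J,A)_\gamma$ (this is fine because the relevant modules are either finitely generated on the nose in the Baer argument or one works one homogeneous degree at a time), and that "graded injective" — closure under extending homs up to all shifts $A(\alpha_i)$ — is preserved under the correspondence. A clean way to sidestep part of this is to prove the adjunction isomorphism $\operatorname{HOM}_R(M,A) \cong_{\gr} \operatorname{HOM}_S(M\otimes_R S, A)$ first as a standalone lemma (it follows from the ungraded adjunction applied degreewise, together with the fact that $M \otimes_R S$ inherits the natural grading and $\phi$ is graded), and then the injectivity transfer is essentially formal. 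The flatness of ${}_R S$ enters exactly once — to guarantee that the inclusion $J \otimes_R S \hookrightarrow R \otimes_R S = S$ is injective with image $JS$, so that an extension over $S$ restricts correctly to an extension over $JS$ and hence over $J$.
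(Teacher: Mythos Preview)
Your proposal is correct and follows essentially the same route as the paper: reduce to the graded Baer criterion, pass from $f:J\to A$ to a graded $S$-map $JS\to A$ using flatness (Lemmas~\ref{basic1} and~\ref{basic2}), extend over $S$ by graded injectivity, and restrict along $\phi$. The only cosmetic difference is that the paper obtains the map $JS\to A$ by tensoring $f$ with $S$ and identifying $A\otimes_R S\cong_{\gr} A\otimes_S S\cong_{\gr} A$ directly, rather than invoking a graded hom--tensor adjunction; this sidesteps most of the $\operatorname{HOM}$/suspension bookkeeping you flagged as the main obstacle.
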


\begin{proof} Let $J$ be a graded right ideal of $R$ and suppose $%
f:J\longrightarrow A$ is a graded morphism of right $R$-modules. Then $f$
induces a graded $S$-module morphism $J\otimes _{R}S\longrightarrow A\otimes_R S$. On the other hand 
$A\otimes_{R}S \cong_{\gr} A\otimes_{S}S \cong_{\gr} A$ by 
Lemma \ref{basic1}. Since $S$ is flat as a left $R$-module, $J\otimes
_{R}S\longrightarrow JS$ is a graded isomorphism by Lemma \ref{basic2}. Hence we get a
graded morphism of right $S$-modules $g:JS\longrightarrow A$ such that $%
g(\phi (x))=f(x)$ for all $x\in J$, noting that $JS=\phi (J)$. Since $A$ is
a graded injective right $S$-module, we get a graded $S$-morphism $%
h:S\longrightarrow A$ such that $h|_{JS}=g$, so that $h(g(\phi (x))=f(x)$ for
all $x\in J$. Then $\theta =h\phi :R\longrightarrow A$ is a graded morphism
such that, for all $x\in J$, $\theta (x)=h(\phi (x))=f(x)$. Hence $A$ is
graded injective as a right $R$-module.
\end{proof}

\begin{definition}
\label{CD} (i) A subset $T$ of vertices in a graph $E$ is said to be \textit{%
countably directed}{\LARGE \ }if there exists a non-empty at most countable
subset $S$ of $T$ such that, for any two $u,v\in T$, there is a $w\in S$
such that $u\geq w$ and $v\geq w$. (Note that, in this case,  $T$ is
trivially downward directed).

(ii) A maximal tail $T$ is called a \textit{special maximal tail} if $T$ is
countably directed.

(iii) \cite{ABR} Given a graph $E$, the set $E^{0}$ is said to have the
countable separation property (for short, CSP), if there is a non-empty at
most countable subset $S\subseteq E^{0}$ such that for each $u\in E^{0}$, there is a $w\in S$ such that $u\geq w$.
\end{definition}

\noindent We are now ready to prove our main theorem. In our proof, two
results play important role: First, Theorem 5.1 in \cite{HR} which states
that if $E$ is an acyclic graph, then every one-sided ideal of $L_{K}(E)$ is
graded and so, in particular, simple $L_{K}(E)$-modules and graded simple $%
L_{K}(E)$-modules coincide. The second result is Theorem 2.4 in \cite{AAPS}
which gives a graphical characterization of the categorically artinian
Leavitt path algebras.

\begin{theorem} \label{siginjecmm} \label{main Th}
Let $E$ be an arbitrary graph. Then the following properties
are equivalent for $L:=L_{K}(E)$:

\begin{enumerate}[(a)]

\item $L$ is a graded right $\Sigma $-$V$ ring;

\item For every graded primitive ideal $P$ of $L$, $L/P$ is categorically
graded artinian;

\item For every graded primitive ideal $P$, $L/P$ is graded isomorphic to $%
M_{\Lambda }(K)(|p^{\bar{v}}|)$ or $M_{\Upsilon }(K[x^{n},x^{-n}])(|q^{\bar{w%
}}|)$ with the matrix gradings indicated in Section \ref{matrixGrading}, where $\Lambda 
$ and $\Upsilon $ are arbitrary index sets and $n$ is a suitable positive
integer.

\item No cycle in $E$ has an exit and, in any special maximal tail $T$ with $%
H=E^{0}\backslash T$, (i) any infinite emitter $u\in T\backslash B_{H}$
satisfies $r(s^{-1}(v))\subseteq H$ and (ii) if an infinite path $p$, with $%
p^{0}\subseteq T$, has infinitely many bifurcations, then almost all the
bifurcating edges $f$ in $p$ satisfy $r(f)\in H$.

\end{enumerate}
\end{theorem}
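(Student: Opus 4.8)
The plan is to prove the four-way equivalence by establishing the cycle of implications (a)$\Rightarrow$(b)$\Rightarrow$(c)$\Rightarrow$(d)$\Rightarrow$(a), using the structural results developed earlier in the paper. The key initial reduction is that a graded $\Sigma$-$V$ ring is in particular a graded $V$-ring, hence graded directly-finite by Proposition~\ref{grSigmaV=>df}, so by Theorem~\ref{munclassroom} no cycle in $E$ has an exit and $L$ is a directed union of graded semisimple subalgebras. This gives us a handle on the graded ideal structure: the graded prime ideals are well understood (each $E^0\setminus H$ for a graded prime $I(H,B_H)$ is downward directed), and modding out by a graded primitive ideal $P$ yields $L/P\cong_{\gr} L_K(E\setminus(H,S))$ where $E^0\setminus H$ has additional directedness. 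First I would show (a)$\Rightarrow$(b): passing to $L/P$, which is again a graded $\Sigma$-$V$ ring and is graded primitive, I want to argue that the essential socle forces every finitely generated graded module over $L/P$ to be graded artinian; the point is that in a graded $\Sigma$-$V$ ring the injective hull of a cyclic graded module is graded $\Sigma$-injective, and combined with gradedness of one-sided ideals (Theorem~5.1 of~\cite{HR}, applicable since the quotient graph is acyclic once no cycle has an exit and $P$ is primitive) this yields the categorical graded artinian property, invoking Theorem~2.4 of~\cite{AAPS} for the graphical translation.

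For (b)$\Rightarrow$(c), I would use the graphical characterization of categorically graded artinian Leavitt path algebras from~\cite{AAPS}: when $L/P$ is categorically graded artinian and $P$ is graded primitive, the quotient graph is either a finite acyclic graph with one sink (after collapsing via the directedness coming from primitivity, it must in fact be an infinite analogue with the line-point structure) or an infinite comet, so that the isomorphism~(\ref{eight}) — in the single-summand case — gives precisely $M_\Lambda(K)(|p^{\bar v}|)$ or $M_\Upsilon(K[x^n,x^{-n}])(|q^{\bar w}|)$ with $n$ the length of the unique cycle. The directedness forces a single equivalence class of line points (or a single cycle), hence a single matrix summand. For (c)$\Rightarrow$(d) I would unwind what the matrix-ring form of each $L/P$ says graphically: $M_\Lambda(K)(|p^{\bar v}|)$ corresponds to the tail being an infinite-acyclic pattern converging to a line point, while $M_\Upsilon(K[x^n,x^{-n}])$ corresponds to a comet; in either case the constraints (i) infinite emitters in $T\setminus B_H$ have all their range in $H$, and (ii) all but finitely many bifurcating edges along any infinite path in $T$ have range in $H$, are exactly what must hold for the quotient graph associated to a special maximal tail to be acyclic-with-line-point or comet-shaped. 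The special maximal tails are precisely the $T=E^0\setminus H$ for which $L/I(H,B_H)$ is graded primitive (via the countable directedness $\Leftrightarrow$ CSP-type criterion of Definition~\ref{CD}), so checking the condition over all graded primitive quotients is the same as checking it over all special maximal tails.

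The remaining implication (d)$\Rightarrow$(a) is where I expect the main obstacle, since it is the synthesis direction: assuming no cycle has an exit and the two tail conditions, I must show every graded simple right $L$-module $S$ is graded $\Sigma$-injective. Let $P=\operatorname{Ann}_L(S)$, a graded primitive ideal; then $S$ is a graded simple $L/P$-module, and any direct sum of suspensions of $S$ is a module over $L/P$. Conditions (d)(i),(ii) together with "no cycle has an exit" should force $L/P$ to have the matrix form of~(c), and over such a matrix ring $M_\Lambda(K)(\bar\delta)$ or $M_\Upsilon(K[x^n,x^{-n}])(\bar\delta)$ one checks directly that arbitrary direct sums of suspensions of the (essentially unique) graded simple module are graded injective — this uses that these matrix rings are graded categorically artinian, hence graded noetherian-like enough that direct sums of graded injectives stay graded injective, combined with the fact that the graded simple module is graded injective to begin with (it is a suspension of a column). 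Finally, graded injectivity over $L/P$ transfers to graded injectivity over $L$ via Proposition~\ref{6.17}, since $L\to L/P$ is a graded epimorphism of rings with homogeneous local units and $L/P$ is graded flat over $L$ (being graded von Neumann regular, as the quotient graph is acyclic). The delicate points will be verifying the graded flatness hypothesis of Proposition~\ref{6.17} in the infinite setting and handling the bookkeeping of the index sets $\Lambda,\Upsilon$ when they are uncountable, where the countable separation property and Lemma~\ref{orthoIdemp} on orthogonal homogeneous idempotents do the work.
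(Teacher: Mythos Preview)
Your cycle structure differs from the paper's (which runs (a)$\Rightarrow$(b)$\Rightarrow$(c)$\Rightarrow$(a) and then separately (a,b,c)$\Rightarrow$(d) and (d)$\Rightarrow$(b)), but that in itself is harmless. The real problem is your argument for (a)$\Rightarrow$(b).

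You write that ``the quotient graph is acyclic once no cycle has an exit and $P$ is primitive,'' and you want to invoke Theorem~5.1 of \cite{HR} (all one-sided ideals are graded) on $L/P$. This is false: if $E$ has a cycle $c$ without exits and $P=I(H,S)$ is the graded primitive ideal with $c^{0}\subseteq E^{0}\setminus H$, then $E\setminus(H,S)$ still contains $c$; indeed this is exactly Case~1 in the paper's treatment of (d), where $L/P\cong_{\gr} M_{\Upsilon}(K[x^{n},x^{-n}])$ with $n=|c|>0$. So Theorem~5.1 of \cite{HR} is unavailable, and the rest of your sketch for (a)$\Rightarrow$(b) --- ``essential socle forces\dots'', ``the injective hull of a cyclic graded module is graded $\Sigma$-injective'' --- does not add up to a proof: $\Sigma$-$V$ says \emph{simple} modules are $\Sigma$-injective, not arbitrary cyclics, and you never explain how an essential graded socle yields categorical graded artinianity. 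The paper's argument here is quite different and does not use the no-exit condition at all: assuming some $v\bar L$ is not graded artinian, one finds a non-finitely-generated graded right ideal in it, applies Lemma~\ref{orthoIdemp} to produce infinitely many orthogonal homogeneous idempotents $e_{n}$, and then builds a graded map $\bigoplus_{n} e_{n}\bar L\to \bigoplus_{n} S_{n}$ (with $S_{n}\cong_{\gr} S$ a faithful graded simple $\bar L$-module) that hits the $n$th summand nontrivially for every $n$; if $\bigoplus S_{n}$ were graded injective this map would extend to $v\bar L$, forcing $g(v)$ to have infinite support, a contradiction. That Bass-type extension obstruction is the missing idea.

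A smaller slip: in your (d)$\Rightarrow$(a) you justify graded flatness of $L/P$ over $L$ by saying ``the quotient graph is acyclic''; same error. The correct reason (and the one the paper uses) is that $L$ itself is always graded von Neumann regular, so every graded $L$-module is graded flat; acyclicity is irrelevant. With that fix, your (d)$\Rightarrow$(a) is essentially the paper's (d)$\Rightarrow$(b) followed by (c)$\Rightarrow$(a) via Proposition~\ref{6.17}, which is fine.
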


\begin{proof}
Assume (a). Let $P$ be a graded primitive ideal of $L$, say $P=I(H,S)$.
Suppose, by way of contradiction, $\bar{L}=L/P\cong L_{K}(E\backslash (H,S))$
is not categorically graded right artinian. Then there is a vertex $v\in
E\backslash (H,S)$ such that $v\bar{L}$ is not graded right artinian. This
means that $v\bar{L}$ contains at least one graded right ideal $N$ which is
not finitely generated, because if every graded right ideal inside $v\bar{L}$
is finitely generated, it will be a graded direct summand of $\bar{L}$ and
hence of $v\bar{L}$, as $\bar{L}\cong L_{K}(E\backslash (H,S))$ is graded
von Neumann regular. This means that $v\bar{L}$ is graded semisimple and
since $v\bar{L}$ is cyclic, it will be graded right artinian, a
contradiction. By Lemma \ref{orthoIdemp}, $N$ contains a right ideal $%
A=\dbigoplus\limits_{n\geq 1}e_{n}\bar{L}$ where the $e_{n}$ are homogeneous
idempotents. Let $S$ be a graded simple module annihilated by $P$, so $S$ is
a faithful graded simple $\bar{L}$-module. Consider the module $%
M=\dbigoplus\limits_{n\geq 1}S_{n}$ where $S_{n}\cong _{gr}S$ for all $n$.
Now, for each $n$, the faithful module $S_{n}$ contains an element $x_{n}$
such that $x_{n}e_{n}\neq 0$. For each $n\geq 1$, define a graded
homomorphism $f_{n}:e_{n}\bar{L}\longrightarrow S_{n}$ mapping $%
e_{n}\longmapsto x_{n}e_{n}$ which gives rise to a graded homomorphism $%
f=\oplus f_{n}:\dbigoplus\limits_{n\geq 1}e_{n}\bar{L}\longrightarrow
\dbigoplus\limits_{n\geq 1}S_{n}=M$. By hypothesis, $M$ is injective and so $%
f$ extends to a\ graded homomorphism $g:v\bar{L}\longrightarrow M$. If $%
g(v)=a$, then, on the one hand $g(e_{n})=g(ve_{n})=g(v)e_{n}=ae_{n}$. On the
other hand, $g(e_{n})=f(e_{n})=x_{n}e_{n}\neq 0$. Thus $ae_{n}=x_{n}e_{n}\in
S_{n}$ and non-zero for all $n\geq 1$. This is a contradiction, since $a\in
S_{1}\oplus \cdot \cdot \cdot \cdot \cdot \cdot \oplus S_{k}$ for some $%
k\geq 1$ and this implies that, for any $n>k$, $ae_{n}\in (S_{1}\oplus \cdot
\cdot \cdot \cdot \cdot \cdot \oplus S_{k})\cap S_{n}=0$. Thus $\bar{L}$ is
categorically graded artinian and a primitive ring. This proves (b).

Assume (b). The proof (b) implies (c) is essentially the graded
version of the ideas obtained by combining parts of the proofs of Theorem
2.4 and Theorem 3.7 of \cite{AAPS}. For clarity, we outline the initial
steps of the proof. Let $P=I(H,S)$ be a graded primitive ideal of $L$ where $%
P\cap E^{0}=H$. Since $\bar{L}=L/P\cong L_{K}(E\backslash H,S))$ is
categorically graded artinian, $E\backslash (H,S)$ must be row-finite;
because if $v$ is an infinite emitter in $E\backslash (H,S)$ with $%
s^{-1}(v)\supseteq \{f_{n}:n=1,2,3\cdot \cdot \cdot \}$, then $v\bar{L}$
contains an infinite descending chain of graded right ideals $%
\dbigoplus\limits_{n\geq 1}f_{n}f_{n}^{\ast }\bar{L}\supset
\dbigoplus\limits_{n\geq 2}f_{n}f_{n}^{\ast }\bar{L}\supset
\dbigoplus\limits_{n\geq 3}f_{n}f_{n}^{\ast }\bar{L}\supset \cdot \cdot
\cdot \cdot \cdot $ , a contradiction. Also no path $p$ in $E\backslash
(H,S) $ contains infinitely many bifurcations; because, otherwise, we write $%
p$ as a concatenation of paths $p=p_{1}p_{2}p_{3}\cdot \cdot \cdot \cdot $
where, for each $n$, there is a bifurcating edge $e_{n}$ with $%
s(e_{n})=r(p_{n})$. If $v=s(p)$, then one can verify that $v\bar{L}$
contains an infinite descending chain of graded right ideals 
\begin{equation*}
p_{1}p_{1}^{\ast }\bar{L}\supsetneqq p_{1}p_{2}p_{2}^{\ast }p_{1}^{\ast }%
\bar{L}\supsetneqq p_{1}p_{2}p_{3}p_{3}^{\ast }p_{2}^{\ast }p_{1}^{\ast }%
\bar{L}\supsetneqq \cdot \cdot \cdot \cdot \cdot
\end{equation*}%
contradicting that $v\bar{L}$ is graded right artinian. Likewise, if there
is a cycle $c$ with an exit $f$ at a vertex $v$ in $\bar{L}$, then again $v%
\bar{L}$ will contain the infinite descending chain of graded right ideals 
\begin{equation*}
cc^{\ast }\bar{L}\supsetneqq c^{2}(c^{\ast })^{2}\bar{L}\supsetneqq c^{3}(c^{\ast
})^{3}\bar{L}\cdot \cdot \cdot \cdot
\end{equation*}%
which is a contradiction. Thus no cycle in $E\backslash (H,S)$ has an exit.
This means that every path in $E\backslash (H,S)$ ends at a line point or at
a cycle without exits. Also $E\backslash (H,S)$ is row-finite. Moreover,
since $E^{0}\backslash H$ is downward directed, either $E\backslash (H,S)$
contains a unique finite or infinite sink and no cycles or $E\backslash
(H,S) $ contains a unique cycle $c$ without exits. Downward directness of $%
E^{0}\backslash H$ also implies, either all the paths in $E\backslash (H,S)$
end at the unique finite or infinite sink or all the paths in $E\backslash
(H,S)$ end at the unique cycle $c$ (without exits). Then by Theorem 3.7 of 
\cite{AAPS},%
\begin{equation*}
L_{K}(H,S))\cong M_{\Lambda }(K)\text{ or }L_{K}(E\backslash (H,S))\cong
M_{\Upsilon }(K[x^{|c|},x^{-|c|}])
\end{equation*}%
which is a graded isomorphism with the matrix gradings as given in Section
2, where $\Lambda $ ($\Upsilon $) denotes the set of all paths in $%
E\backslash (H,S)$ that end at the unique sink (the set of all paths that
end at $c$ but do not contain the entire cycle $c$). This proves (c).

Assume (c). Let $S$ be a graded simple right $L$ module and let $P$ be the
right annihilator of $S$ in $L$. Then $P$ is a graded primitive ideal and
so, by hypothesis, either $L/P\cong _{gr}M_{\Lambda }(K)(|\overset{-}{p^{%
\bar{v}}|)\text{ or }}M_{\Upsilon }(K[x^{|n|},x^{-|n|}])(|q^{\bar{w}}|)$.
Consequently, $L/P$ is graded semi-simple \cite{HR}. In particular, as a
right $L/P$-module, $S$ is graded $\Sigma $-injective. Since $L$ is graded
von Neumann regular, $L/P$ is graded flat as a graded right $L$-module. Then
by Proposition 4.9, $S$ is $\Sigma $-injective as a graded right $L$-module.
This proves (a).

Assume (a) (and (b),(c)). By Proposition 4.2, no cycle in the graph $E$ has
an exit. Let $T$ be a special maximal tail in $E$ and let $H=E^{0}\backslash
T$. Consider $E\backslash (H,B_{H})$.

Case 1: Suppose $E\backslash (H,B_{H})$ contains a cycle $c$ based at a
vertex $w$. Since $c$ has no exit and since $(E\backslash (H,B_{H}))^{0}=T$
is, in particular,  downward directed, $c$ is the only cycle in $E\backslash
(H,B_{H})$ and $u\geq w$ for all vertices $u\in E\backslash (H,B_{H})$. Let $%
I$ be the two-sided graded ideal of $\bar{L}=L_{K}(E\backslash (H,B_{H}))$
generated by $c^{0}$. By the graded version of Lemma 2.7.1 of \cite{AAS-1} and
using the matrix gradings indicated in Section 2 , $I\cong _{gr}M_{\Upsilon
}(K[x^{n},x^{-n})(|q^{\bar{w}}|)$, where $\Upsilon $ is the set of all paths
that end at $w$ but do not include the entire cycle $c$ and $n$ is the
length of the cycle $c$. Thus, as an $I$-module, $I$ is a direct sum of
isomorphic graded simple right ideals of $I$, each of which is graded
isomorphic to $K[x,x^{-1}]$. Since the graded ideal $I$ is a ring with local
units \cite{AAPS}, these isomorphic graded simple right ideals of $I$ are
also isomorphic graded simple right ideals of $\bar{L}$. As $\bar{L}$ is a
graded $\sum $-$V$-ring, $I$ is then a graded injective right $\bar{L}$%
-module and hence is a direct summand of $\bar{L}$. On the other hand, since 
$u\geq w$ for every $u\in E\backslash (H,B_{H})^{0}$, Proposition 2.7.10 in 
\cite{AAS-1} implies that $I$ is an essential right ideal of $\bar{L}$.
Consequently, $\bar{L}=I\cong _{gr}M_{\Upsilon }(K[x^{n},x^{-n})(|q^{\bar{w}%
}|)$. By Theorem 3.7 of \cite{AAPS}, $E\backslash (H,B_{H})$ is then row
finite and every infinite path is tail equivalent to the rational path $%
ccc\cdot \cdot \cdot $. Now if $T$ contains an infinite emitter $u\notin
B_{H}$, $u$ must be an infinite emitter or a sink in $E\backslash (H,B_{H})$%
. As $E\backslash (H,B_{H})$ is row-finite, $u$ is then a sink in $%
E\backslash (H,B_{H})$ and this means that, $r_{E}(s_{E}^{-1}(u))\subseteq H
$. Suppose $p$ is an infinite path  with $p^{0}\subseteq T$ and has
infinitely many bifurcating edges in $E$. Since $p$ is an infinite path in $%
E\backslash (H,B_{H})$, as noted above, $p$ is then  tail-equivalent to the
rational path $ccc\cdot \cdot \cdot $. This means that  $r(f)\in H$ for
almost all the bifurcating edges $f$ of $p$. Thus $T$ has the desired
properties. 

Case 2: Suppose $E\backslash (H,B_{H})$ contains no cycles. Let $P=I(H,B_{H})
$. Now $E\backslash (H,B_{H})$, being acyclic, trivially satisfies Condition
(L).  Also $(E\backslash (H,B_{H}))^{0}=T$ is countably directed and hence $%
(E\backslash (H,B_{H}))^{0}$ is not only downward directed but also
satisfies the CSP. So $\bar{L}=L_{K}(E\backslash (H,B_{H}))$ is a primitive
ring, by Proposition 4.8 in \cite{ABR}. On the other hand, since $%
E\backslash (H,B_{H})$ is acyclic, Theorem 5.1 in \cite{HR} implies that the
set of simple right $\bar{L}$-modules coincides with the set of graded
simple right $\bar{L}$-modules. Consequently, $L/P\cong _{gr}\bar{L}$ is a
graded primitive ring and thus $P$ is a graded primitive ideal of $L$. By
(b), $L/P$ is categorically graded artinian. Since $E\backslash (H,B_{H})$
is acyclic, every one-sided ideal of $L_{K}(E\backslash (H,B_{H}))$ is
graded by Theorem 5.1 in \cite{HR}. Consequently, $L_{K}(E\backslash
H,B_{H}))$ is categorically artinian. We then appeal to Theorem 2.4 in \cite%
{AAPS} to conclude that $E\backslash (H,B_{H})$ is row finite and every
infinite path is tail equivalent to an infinite sink. This means, as in Case
1, if $T$ contains an infinite emitter $u\notin B_{H}$, then $%
r_{E}(s_{E}^{-1}(u))\subseteq H$. Also if an infinite path $p$ with $%
p^{0}\subseteq T$ has infinitely many bifurcating edges in $E$, then, since 
$p$ is an infinite path in $E\backslash (H,B_{H})$ tail-equivalent to an
infinite sink, $r(f)\in H$ for almost all the bifurcating edges $f$ of $p$.
This proves (d).

Assume (d). Let $P=I(H,S)$ be a graded primitive ideal of $L$. By Theorem
4.3 in \cite{R-1}, $T=E^{0}\backslash H$ is downward directed and satisfies
the CSP and so $T$ is countably directed and is a special maximal tail.

Case 1: Suppose $E\backslash (H,S)$ contains no cycles. Now, by hypothesis
(d), $r(s^{-1}(v))\subseteq H$ for every infinite emitter $u\in T\backslash
B_{H}$ and if an infinite path $p$ with $p^{0}\subseteq T$ has infinitely
many bifurcations in $E$, then almost all the bifurcating edges $f$ \ in $p$
satisfy that $r(f)\in H$. This means that $E\backslash (H,S)$ is a
row-finite graph and every infinite path is tail-equivalent to an infinite
sink which is unique due to downward directedness of $E\backslash (H,S)$.
Since $E\backslash (H,S)$ is further acyclic,  we appeal to Theorem 2.4 in 
\cite{AAPS}, to conclude that $L_{K}(E\backslash (H,S))$ is categorically
artinian. Now the fact that $E\backslash (H,S)$ contains no cycles implies,
by Theorem 5.1 in \cite{HR}, that every one-sided ideal of $%
L_{K}(E\backslash (H,S))$ is graded and so $L_{K}(E\backslash (H,S))$ is
categorically graded artinian.

Case 2: Suppose $E\backslash (H,S)$ and hence $T$ contains a cycle $c$ based
at a vertex $w$. Since $T$ is downward directed and since $c$ has no exits
in $E\backslash (H,S)$, $u\geq w$ for every $u\in T$. Thus $T$ is countably
directed with respect to $\{w\}$. Proceeding as before, the hypothesis on $%
T$ implies that $E\backslash (H,S)$ is a row-finite graph with a unique
cycle $c$ and every infinite path in $E\backslash (H,S)$ is tail-equivalent
to the rational path $ccc\cdot \cdot \cdot $. Then, by Theorem 3.7 in \cite%
{AAPS}, $L_{K}(E\backslash (H,S))\cong M_{\Upsilon }(K[x^{n},x^{-n})$ which
is a graded isomorphism under the matrix grading indicated in Section 2.
Thus we conclude that $L/P\cong _{gr}L_{K}(E\backslash (H,S))\cong
_{gr}M_{\Upsilon }(K[x^{n},x^{-n})(|q^{\bar{w}}|)$ which is categorically
graded artinian, as $M_{\Upsilon }(K[x^{n},x^{-n})(|q^{\bar{w}}|)$ is graded
semi-simple. This proves (b).
\end{proof}

\bigskip

When the graph $E$ is row-finite, the tree $T_{E}(v)$ is at most countable
for any vertex $v$ in $E$. So if $T$ is a non-empty maximal tail and $v\in T$%
, then $T$ will be countably directed with respect to the at most countable
set $T\cap T_{E}(v)$ and hence is a special maximal tail. In this case, the
condition (d) in Theorem \ref{main Th} gets simplified as noted below.

\begin{corollary}
\label{Row-finite} Let $E$ be a row-finite graph. Then $L_{K}(E)$ is
graded $\Sigma $-$V$ ring if and only if no cycle in $E$ has an exit and if $%
T$ is a maximal tail in $E$ with $H=E^{0}\backslash T$, then any infinite
path $p$ with $p^{0}\subseteq T$ and having infinitely many bifurcations in 
$E$ has almost all its bifurcating edges $f$ satisfying $r(f)\in H$.
\end{corollary}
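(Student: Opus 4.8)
The plan is to obtain this as an immediate specialization of the equivalence (a)$\Leftrightarrow$(d) in Theorem~\ref{main Th}, the only real content being to identify what condition (d) becomes when $E$ is row-finite. The first step is to record that for a row-finite graph every non-empty maximal tail is automatically a \emph{special} maximal tail: given a maximal tail $T$, fix any $v_0\in T$ and set $S:=T\cap T_E(v_0)$. Since $E$ is row-finite, $T_E(v_0)$ is at most countable, hence so is $S$, and $S$ is non-empty. Given $u_1,u_2\in T$, downward directedness of $T$ produces $w_1\in T$ with $u_1\ge w_1$ and $u_2\ge w_1$, and a second application produces $w\in T$ with $w_1\ge w$ and $v_0\ge w$; then $w\in S$ and $u_i\ge w_1\ge w$ for $i=1,2$. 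Thus $T$ is countably directed, i.e.\ special, and conversely every special maximal tail is a maximal tail by definition, so the two notions coincide in this setting.

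The second step is to note that a row-finite graph has no infinite emitters, so $B_H=\emptyset$ for every hereditary saturated subset $H$. Consequently, in Theorem~\ref{main Th}(d), clause (i) is vacuous and $T\setminus B_H=T$, so condition (d) collapses to exactly: no cycle in $E$ has an exit, and for every maximal tail $T$ with $H=E^0\setminus T$, every infinite path $p$ with $p^0\subseteq T$ having infinitely many bifurcations satisfies $r(f)\in H$ for almost all bifurcating edges $f$ of $p$. Feeding this description into the equivalence (a)$\Leftrightarrow$(d) of Theorem~\ref{main Th} yields the Corollary.

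Since all the substantive work is already carried out in Theorem~\ref{main Th}, there is no genuine obstacle here; the only point demanding a little care is the double application of downward directedness in the first step, which produces the countable witness set lying \emph{inside} $T$, so that $H=E^0\setminus T$ is the associated hereditary saturated set and clause (d)(ii) applies verbatim.
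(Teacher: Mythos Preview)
Your proof is correct and follows essentially the same approach as the paper. The paper's argument (given in the paragraph preceding the corollary rather than as a formal proof) likewise observes that for row-finite $E$ every tree $T_E(v)$ is at most countable, so any maximal tail $T$ is countably directed with respect to $T\cap T_E(v)$ and hence special; your explicit double application of downward directedness and your remark that $B_H=\emptyset$ (making clause (i) vacuous) simply spell out details the paper leaves implicit.
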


Now the intersection of all graded primitive ideals of $L_{K}(E)$, being the
graded Jacobson radical of $L_{K}(E)$, is zero (see \cite{H-2}) and so we get
the following as a consequence of Theorem \ref{siginjecmm}.

\begin{corollary}\label{siginjeccc}
If for arbitrary graph $E$, $L:=L_{K}(E)$ is a graded $\Sigma$-$V$ ring, then
$L$ is a graded subdirect product of matrix rings of arbitrary size but with
finitely many non-zero entries over $K$ or $K[x,x^{-1}]$ equipped with
appropriate matrix gradings.
\end{corollary}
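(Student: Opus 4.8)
The plan is to obtain the corollary as a formal consequence of Theorem~\ref{siginjecmm} together with the fact, recorded in the remark just above, that the graded Jacobson radical of $L$ vanishes. A graded ring whose graded Jacobson radical is zero is canonically a graded subdirect product of its quotients by the graded primitive ideals, and Theorem~\ref{siginjecmm} tells us exactly what those quotients look like once $L$ is a graded $\Sigma$-$V$ ring.

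First I would recall that $\bigcap_P P = J^{\gr}(L) = 0$, where $P$ ranges over all graded primitive ideals of $L$; this is precisely the statement preceding the corollary, via \cite{H-2}. Next, since $L$ is assumed to be a graded $\Sigma$-$V$ ring, the implication (a) $\Rightarrow$ (c) of Theorem~\ref{siginjecmm} yields, for every graded primitive ideal $P$, a graded isomorphism $L/P \cong_{\gr} M_\Lambda(K)(|p^{\bar v}|)$ or $L/P \cong_{\gr} M_\Upsilon(K[x^n,x^{-n}])(|q^{\bar w}|)$ for suitable index sets $\Lambda,\Upsilon$ and positive integer $n$. In the second case $M_\Upsilon(K[x^n,x^{-n}])$ is a matrix ring over $K[x,x^{-1}]$ equipped with the grading obtained by combining the shift vector $|q^{\bar w}|$ with the $n$-fold rescaling of the $x$-grading, so in either case $L/P$ is one of the matrix rings of arbitrary size with only finitely many nonzero entries over $K$ or $K[x,x^{-1}]$, carrying an appropriate matrix grading as in Section~\ref{matrixGrading}.

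Finally I would consider the canonical homomorphism $\phi : L \to \prod_P L/P$, $a \mapsto (a+P)_P$. Because each $P$ is a graded ideal, every quotient map $L \to L/P$ is degree preserving, so $\phi(L_m) \subseteq \prod_P (L/P)_m$ for every $m \in \mathbb{Z}$ and $\phi$ carries the $\mathbb{Z}$-graded ring $L = \bigoplus_m L_m$ graded-isomorphically onto the graded subring $\bigoplus_m \phi(L_m)$ of $\prod_P L/P$; it is injective because $\ker \phi = \bigcap_P P = 0$, and composing $\phi$ with the projection onto any single $L/P$ gives a surjection. Hence $L$ is (graded isomorphic to) a graded subdirect product of the family $\{L/P\}_P$, each member of which, by the previous step, is a matrix ring of arbitrary size with finitely many nonzero entries over $K$ or $K[x,x^{-1}]$ with an appropriate matrix grading.

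I do not expect any serious obstacle here: all the substance is already packed into Theorem~\ref{siginjecmm}, and what remains is the routine repackaging of the vanishing of the graded Jacobson radical into a subdirect-product representation. The only point warranting a line of care is the grading bookkeeping, namely checking that $M_\Upsilon(K[x^n,x^{-n}])(|q^{\bar w}|)$ genuinely counts as a ``matrix ring over $K[x,x^{-1}]$ with an appropriate matrix grading''; this is immediate from the description in Section~\ref{matrixGrading} and requires no new idea.
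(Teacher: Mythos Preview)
Your proposal is correct and follows exactly the approach the paper takes: the paper simply remarks that the intersection of all graded primitive ideals is the graded Jacobson radical, which vanishes by \cite{H-2}, and then states the corollary as an immediate consequence of Theorem~\ref{siginjecmm}. You have merely spelled out the routine subdirect-product packaging that the paper leaves implicit.
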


If $L_K(E)$ is a graded $\Sigma$-$V$ ring, $L_K(E)$ need not decompose as a graded
direct sum of matrix rings, as the following example shows.

\begin{example} 
\label{InfiniteClock} Consider the following ``infinite clock" graph $E$:
\begin{equation}\label{inififi}
\xymatrix{ & \, \, {\bullet}^{w_1} & {\bullet}^{w_2} \\
 & {\bullet}^v \ar@{.>}[ul] \ar[u] \ar[ur] \ar[r] \ar@{.>}[dr] \ar@{.>}[d]
\ar@{}[dl] _{(\infty)} & {\bullet}^{w_3} \\ &  & {\bullet}} 
\end{equation}

Thus $E^{0}=\{v\}\cup\{w_{1},w_{2},\cdots\}$, 
where the $w_{i}$ are all sinks. For each $n\geq1$, let $e_{n}$ denote the
single edge connecting $v$ to $w_{n}$. The graph $E$ is acyclic and so every
ideal of $L$ is graded. The number of distinct paths ending at any given sink
\ (including the sink) is $\leq2$. For each $n\geq1$, $H_{n}=\{w_{i}:i\neq
n\}$ is a hereditary saturated set, $B_{H_{n}}=\{v\}$ and $E^{0}\backslash
H_{n}=\{v,w_{n}\}$ is downward directed. Hence the ideal $P_{n}$ generated by
$H_{n}\cup\{v-e_{n}e_{n}^{\ast}\}$ is a\ graded primitive ideal and
$L_{K}(E)/P_{n}\cong \Ma_{2}(K)$. Moreover, every graded (prime) primitive ideal
$P$ of $L_{K}(E)$ is equal to $P_{n}$ for some $n$. By Theorem \ref{main Th},
$L_{K}(E)$ is a\ graded $\Sigma$-$V$ ring.

But $L_{K}(E)$ cannot be decomposed as a direct sum of the matrix rings $\Ma_{2}%
(K)$. Because, otherwise, $v$ would lie in a direct sum of finitely many
copies of $\Ma_{2}(K)$. Since the ideal generated by $v$ is $L_{K}(E)$,
$L_{K}(E)$ will then be a direct sum of finitely many copies of $\Ma_{2}(K)$.
This is impossible since $L_{K}(E)$ contains an infinite set of orthogonal
idempotents $\{e_{n}e_{n}^{\ast}:n\geq1\}$.

We can also describe the internal structure of this ring $L_{K}(E)$. The socle
$S$ of $L_{K}(E)$ is the ideal generated by the sinks $\{w_{i}:i\geq1\}$,
$S\cong%
{\displaystyle\bigoplus\limits_{\aleph_{0}}}
\Ma_{2}(K)$ and $L_{K}(E)/S\cong K$.
\end{example}

If $E$ is a row-finite or a countable graph, then again the graded $\Sigma $-$V
$ ring $L_{K}(E)$ in Theorem \ref{Row-finite} need not decompose as a 
direct sum of matrix rings $\Ma_{\Lambda }(K)(|\overline{q^w}|)$ and $\Ma_{\Upsilon
}(K[x^{n},x^{-n}])(|\overline{q^w}|)$ as the next example shows.

\begin{example}
\label{row-finite/countable graph} Consider the following ``$\mathbb N \times \mathbb N$-Lattice" row-finite graph $F$, where the
vertices in $F$ are points in the first quadrant of the coordinate plane
whose coordinates are integers $\geq 0$. Specifically, $F^{0}=\{(m,n):m,n%
\geq 0\}$.

\begin{equation*}
\begin{array}{cccccccc}
\vdots  &  & \vdots  &  & \vdots  &  & \vdots  &  \\ 
\uparrow  &  & \uparrow  &  & \uparrow  &  & \uparrow  &  \\ 
\bullet _{(0,3)} &  & \bullet _{(1,3)} &  & \bullet _{(2,3)} &  & \bullet
_{(3,3)} & \cdot \cdot \cdot  \\ 
\uparrow  &  & \uparrow  &  & \uparrow  &  & \uparrow  &  \\ 
\bullet _{(0,2)} &  & \bullet _{(1,2)} &  & \bullet _{(2,2)} &  & \bullet
_{(3,2)} & \cdot \cdot \cdot  \\ 
\uparrow  &  & \uparrow  &  & \uparrow  &  & \uparrow  &  \\ 
\bullet _{(0,1)} &  & \bullet _{(1,1)} &  & \bullet _{(2,1)} &  & \bullet
_{(3,1)} & \cdot \cdot \cdot  \\ 
\uparrow  &  & \uparrow  &  & \uparrow  &  & \uparrow \vdots  &  \\ 
\bullet _{(0,0)} & \longrightarrow  & \bullet _{(1,0)} & \longrightarrow  & 
\bullet _{(2,0)} & \longrightarrow  & \bullet _{(3,0)} & \rightarrow \cdot
\cdot \cdot 
\end{array}%
\end{equation*}%
Now $F$ is acyclic and so, by Theorem 5.1 \cite{HR}, every one sided ideal
of $L_{K}(F)$ is graded. It is easy to verify that there are only two
(graded) prime (primitive) ideals in $L_{K}(F)$, namely, the ideal $A$
generated by all points not on the X-axis, namely, $A=\langle (m,n):n\neq 0 \rangle$
and the ideal $B$ generated by all the points not on the Y-axis, namely, $%
B=\langle (m,n):m\neq 0  \rangle $ (Because, both $F^{0}\backslash A$ and $%
F^{0}\backslash B$ are downward directed). Now both $L_{K}(F)/A$ and $%
L_{K}(F)/B$ are graded isomorphic to $\Ma_{\infty }(K)$ with appropriate
matrix gradings. Hence, by Corollary \ref{Row-finite}, $L_{K}(F)$ is a graded $\Sigma$-$V$ ring. Note that the line points in the graph $F$ are all the
points not on the $X$-axis and they form a hereditary saturated set. It is
known (\cite{AAS-1}) that the ideal generated by line points is the socle 
$\Soc(L_{K}(F))$. Clearly, $(0,0)\notin $ $\Soc(L_{K}(F))$. We claim that $%
L_{K}(F)$ cannot decompose as a direct sum of matrix rings  graded
isomorphic to $\Ma_{\infty }(K)$. Because, otherwise,  $\Soc(\Ma_{\infty
}(K))=\Ma_{\infty }(K)$ will imply $\Soc(L_{K}(F))=L_{K}(F)$, a contradiction
since $(0,0)\notin $ $\Soc(L_{K}(F))$.
\end{example}

\noindent However, as we shall see next, if the graph $E$ is finite and $L:=L_{K}(E)$
is a graded $\Sigma$-$V$ ring, then $L$ admits the desired decomposition.

\begin{theorem}\label{siginjec}
\label{Finite graph}Let $E$ be a finite graph. Then for  $L:=L_{K}(E)$ the following are
equivalent. 
\begin{enumerate}[\upshape(a)]
\item $L$ is a graded $\Sigma$-$V$ ring;
\medskip

\item No cycle in $E$ has an exit;
\medskip 

\item $L$ is (graded) directly-finite;

\medskip 

\item $L\cong_{\gr}
{\displaystyle\bigoplus\limits_{v_{i}\in X}}
\Ma_{n_{i}}(K)(|\overline{p^{v_{i}}}|)\oplus%
{\displaystyle\bigoplus\limits_{w_{j}\in Y}}
\Ma_{m_{j}}(K[x^{|c_{j}|},x^{-|c_{j}|}])(|\overline{q^{w_{j}}}|)$, where
$n_{i}$ and $m_{j}$ are positive integers, $X, Y$ are finite sets being the set
of sinks and the set of all cycles in $E$ respectively;

\medskip

\item $L$ is graded left/right semi-simple, that is, $L$ is a graded direct sum
of (finitely many) graded simple left/right $L$-modules;

\medskip

\item $L$ has bounded index of nilpotence;

\medskip

\item $L$ is a graded self-injective ring.
\end{enumerate}
\end{theorem}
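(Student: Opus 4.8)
The strategy is to make condition (b) --- ``no cycle in $E$ has an exit'' --- the hub and to check that every other condition is equivalent to it; since $E$ is finite, the infinitary hypotheses in the general results quoted below all evaporate. The equivalence (a)$\Leftrightarrow$(b) is a direct application of Theorem~\ref{main Th}: a finite graph has no infinite emitters, and once no cycle has an exit every infinite path eventually just traverses a cycle forever and so has only finitely many bifurcations; hence clause (d) of Theorem~\ref{main Th} collapses to exactly ``no cycle in $E$ has an exit.'' Likewise (b)$\Leftrightarrow$(c) is Theorem~\ref{munclassroom} together with the proposition immediately following it (the one on graphs with finitely many vertices), which handles the unital reading of graded direct-finiteness in (c).

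For (b)$\Rightarrow$(d): assuming no cycle has an exit, a pigeonhole argument on the finite set $E^{0}$ shows an infinite path must revisit a vertex, hence run into a cycle, and --- the cycle having no exit --- then stay on it forever, so every infinite path is tail-equivalent to a rational path $ccc\cdots$; thus the hypotheses of the graded isomorphism~(\ref{eight}) hold, and as the set $X$ of sinks and the set $Y$ of cycles are finite with all $n_i,m_j$ finite, (\ref{eight}) is precisely (d). Then (d)$\Rightarrow$(e) is routine: each $\Ma_{n}(K)(\bar\delta)$ is the graded direct sum of its (suitably suspended) columns, each a graded simple module, and the same holds for $\Ma_{m}(K[x^{|c|},x^{-|c|}])(\bar\delta)$ once one observes that $K[x^{|c|},x^{-|c|}]$, graded by the powers of $x^{|c|}$, is a graded division ring. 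From (e) one reads off (g) and (a) at once (a graded semisimple ring has every graded module graded injective, so $L$ is in particular graded self-injective and graded $\Sigma$-$V$), and (f) follows from the explicit form (d): $\Ma_{n}(K)$ and $\Ma_{n}(K[x,x^{-1}])$ --- matrices over a field and over a commutative domain --- have index of nilpotence at most $n$, so a finite direct sum of such rings has bounded index at most $\max\{n_i,m_j\}$.

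It remains to establish the return arrows (e)$\Rightarrow$(b), (f)$\Rightarrow$(b) and (g)$\Rightarrow$(b); in each I argue by contradiction from a cycle $c$ with an exit, which after rotating $c$ to its exit vertex $v$ yields homogeneous idempotents $p=cc^{\ast}$ and $e_{0}=v-cc^{\ast}$, orthogonal and summing to $v$, with $e_{0}\neq0$ (left-multiply $e_0$ by the exit edge), together with the degree-$|c|$ graded isomorphism $vL\cong_{\gr}(pL)(|c|)$ given by left multiplication by $c$. For (e)$\Rightarrow$(b): in a graded semisimple ring the graded cyclic module $vL$ has finite graded composition length $\ell$, which is additive over graded direct sums and invariant under suspension, so from $vL=pL\oplus e_{0}L$ and $\ell(pL)=\ell(vL)$ we get $\ell(vL)=\ell(vL)+\ell(e_{0}L)$, forcing $e_{0}L=0$, a contradiction. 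For (f)$\Rightarrow$(b): the homogeneous elements $e_{ij}:=c^{i}e_{0}(c^{\ast})^{j}$ ($i,j\ge0$) are a set of matrix units in the corner $vLv$ (one checks $e_{ij}e_{kl}=\delta_{jk}e_{il}$ and $e_{ij}\neq0$), so for each $N$ the element $e_{01}+e_{12}+\cdots+e_{N-2,N-1}$ is nilpotent of index exactly $N$, and $L$ has no bounded index of nilpotence.

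Finally (g)$\Rightarrow$(b) is the step I expect to be the crux. The same data give $vL=pL\oplus e_{0}L\cong_{\gr}vL(-|c|)\oplus e_{0}L$ with $e_{0}L\neq0$, so $vL$ --- a graded direct summand of the graded self-injective ring $L$, hence itself graded injective --- is graded directly-infinite in the sense of Definition~\ref{jjuuii}; iterating the decomposition, $vL$ contains the infinite internal graded direct sum $\bigoplus_{i\ge0}e_{i}L$ (with $e_{i}=c^{i}e_{0}(c^{\ast})^{i}=e_{ii}$) of nonzero graded submodules, each a suspension of $e_{0}L$. What is still missing is a contradiction between the presence of such an infinite direct sum inside the graded injective cyclic module $vL$ and graded self-injectivity of $L$; I would extract it from the fact that for a finite graph $L_{K}(E)$ is a finitely generated $K$-algebra, which ought to upgrade ``graded von Neumann regular and graded self-injective'' to ``graded semisimple'' (hence graded directly-finite). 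It is precisely this passage --- from finite generation as an algebra plus graded self-injectivity to graded semisimplicity --- that I expect to be the main obstacle; an alternative would be to invoke the type decomposition of graded right self-injective von Neumann regular rings and rule out the properly infinite types for a Leavitt path algebra of a finite graph.
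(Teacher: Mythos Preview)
Your argument is correct and essentially parallel to the paper's up through the implications (b)$\Rightarrow$(d)$\Rightarrow$(e)$\Rightarrow$(a), (d)$\Rightarrow$(f), and (f)$\Rightarrow$(b); the main difference in that part is cosmetic (you deduce (a)$\Leftrightarrow$(b) from Theorem~\ref{main Th}, while the paper deduces (a)$\Rightarrow$(b) from Proposition~\ref{grSigmaV=>df} together with Theorem~\ref{munclassroom}, and your extra direct argument for (e)$\Rightarrow$(b) via composition length is fine but unnecessary once (e)$\Rightarrow$(a)$\Rightarrow$(b) is in place).

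The genuine gap is exactly where you flag it: the implication (g)$\Rightarrow$(b). You correctly produce, inside the graded injective cyclic module $vL$, an infinite independent family $\bigoplus_{i\ge 0}e_iL$ of nonzero graded summands, but you do not extract a contradiction from this. Neither of your suggested completions is carried out: ``finitely generated $K$-algebra plus graded self-injective implies graded semisimple'' is not something established in the paper (and is not obvious), and invoking a graded type-decomposition theory for self-injective regular rings would require machinery well beyond what is available here. So as written the cycle of equivalences is not closed.

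The paper sidesteps this entirely: it does not prove (g)$\Rightarrow$(b) at all, but instead imports the equivalence (d)$\Longleftrightarrow$(g) from \cite{HR}. With that citation in hand, (g) is tied to (d) and hence to all the other conditions without any further work. If you want a self-contained route, one option is to argue (g)$\Rightarrow$(c): your data give homogeneous $c,c^\ast$ with $c^\ast c=v$ and $cc^\ast\neq v$, so $L$ is not graded directly-finite; but then $vL\cong_{\gr}vL(-|c|)\oplus e_0L$ with $e_0L\neq 0$ shows $vL$ is a graded injective module that is graded directly-infinite in the sense of Definition~\ref{jjuuii}, and one must still convert this into a contradiction with graded self-injectivity of the unital ring $L$ --- which is precisely the step you are missing. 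Absent an independent argument of that sort, you should simply quote \cite{HR} for (d)$\Leftrightarrow$(g), as the paper does.
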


\begin{proof}
Now, by Proposition \ref{grSigmaV=>df} and Theorem \ref{munclassroom}, 
(a) $\Longrightarrow$
(b) and (b)
$\Longleftrightarrow$
(c). The equivalence (d)
$\Longleftrightarrow$
(g) was shown in \cite{HR}.

Assume (b). Since $E$ is a finite graph, every path in $E$ eventually ends
at a sink or is tail equivalent to a rational path $ccc\cdots$ for
some cycle $c$ without exits. Then, by \cite[Theorem 3.7]{AAS-1},
\begin{equation}\label{star}
L\cong
{\displaystyle\bigoplus\limits_{v_{i}\in X}}
\Ma_{n_{i}}(K)\oplus
{\displaystyle\bigoplus\limits_{c_{j}\in Y}}
\Ma_{m_{j}}(K[x,x^{-1}]),
\end{equation}
where $X$ and $Y$ are finite sets denoting respectively the set of sinks
and the set of distinct cycles in $E$ and for each $i$, $n_{i}$, denotes
the number of paths ending at the sink $v_{i}$ and for each $j$,  $m_j$ denotes the number
of paths tail equivalent to the rational path $c_{j}c_{j}c_{j}\cdots
$. As established in Section~\ref{grprem}, the matrices in (\ref{star}) can be given
appropriate matrix gradings giving rise to a graded isomorphism%
\begin{equation}\label{star2}
L\cong_{\gr}
{\displaystyle\bigoplus\limits_{v_{i}\in X}}
\Ma_{n_{i}}(K)(|\overline{p^{v_{i}}}|)\oplus%
{\displaystyle\bigoplus\limits_{w_{j}\in Y}}
\Ma_{m_{j}}(K[x^{|c_{j}|},x^{-|c_{j}|}])(|\overline{q^{w_{j}}}|).
\end{equation}
This proves (d).

Assume (d). The matrices $\Ma_{n_{i}}(K)(|\overline{p^{v_{i}}}|)$ and
$\Ma_{n_{j}}(K[x^{|c_{j}|},x^{-|c_{j}|}])(|\overline{q^{w_{j}}}|)$ are both
direct sums of graded simple left/right modules (see \cite{HR}).
Consequently, $L$ is graded left/right semi-simple, thus proving (e).

Assume (e). Since $L$ is graded left/right semi-simple, every graded
left/right $L$-module is injective. In particular, $L$ is a graded $\sum$%
-$V$ ring, thus proving (a).

Assume (d). Now a matrix ring of finite order $n$ over $K$ or $K[x,x^{-1}]$
has bounded index of nilpotence $n$ (see \cite{G}). Since the index sets $\Lambda$
and $\Upsilon$ in (\ref{star2}) are finite, we then conclude that $L$ has bounded
index of nilpotence, thus proving (f).

Assume (f), so $L$ has bounded index of nilpotence , say $n$. Suppose, by
way of contradiction, that $E$ contains a cycle $c$ with an exit $f$ at a
vertex $v$. Consider the set $\{\varepsilon_{ij}=c^{i}ff^{\ast}(c^{\ast}%
)^{j}:1\leq i,j\leq n+1\}$. Clearly the $\varepsilon_{ij}$ form a set of
matrix units as $(\varepsilon_{ii})^{2}=\varepsilon_{ii}$ and $\varepsilon
_{ij}\varepsilon_{kl}=\varepsilon_{il}$ or $0$ according as $j=k$ or not. Then
it is easy to see that the set $S=\big \{
{\displaystyle\sum\limits_{i=1}^{n+1}} \, 
{\displaystyle\sum\limits_{j=1}^{n+1}}\,
k_{ij}\varepsilon_{ij}:k_{ij}\in K \big \}$ is a subring of $L$ isomorphic to the
matrix ring $\Ma_{n+1}(K)$. Since $S$ has bounded index of nilpotence $n+1$,
this is a contradiction. Hence no cycle in $E$ has an exit, thus proving (b). 

This finishes equivalence of all the statements. 
\end{proof}

\begin{remark}
We have seen that the notions of direct-finiteness and graded direct-finiteness coincide for Leavitt path algebras. However, this is not the case for $\Sigma$-$V$ and graded $\Sigma$-$V$ Leavitt path algebras as we will see in the examples below. 
\end{remark}

\begin{example}
If the graph $E_{1}$ consists of a single vertex
$v$ an a single loop $c$ based at $v$, then $L_{K}(E_{1})\cong K[x,x^{-1}]$
under the map $v\longmapsto1$, $c\longmapsto x$ and $c^{\ast}\longmapsto
x^{-1}$. As a $K[x,x^{-1}]$-module, $K[x,x^{-1}]$ is graded simple since
$\{0\}$ is the only proper graded simple module, but it is not a simple
$K[x,x^{-1}]$-module, as $K[x,x^{-1}]$ contains infinitely many ideals. Thus
trivially, $L_{K}(E_{1})$ is a graded $\Sigma$-$V$ ring, but is not a $\Sigma$-$V$ ring (actually, being a commutative integral domain, it is not von
Neumann regular and thus not even a $V$-ring). 
\end{example}

\begin{example}
If $E_{2}$
is a finite graph consisting of a cycle $c$ without exits, every path in
$E_{2}$ eventually end at a vertex on $c$ and if the integer $n$ is the total
number paths that end at a vertex on $c$ and does not include the entire cycle
$c$, then $L_{K}(E_{2})\cong \Ma_{n}(K[x,x^{-1}])$ under the natural grading of
matrices (see \cite{HR}) and is a graded artinian semi-simple ring being a
graded direct sum of finitely many graded simple modules isomorphic to
$K[x,x^{-1}]$. Thus $L_{K}(E_{2})$ is a graded $\Sigma$-$V$ ring, but is not a
$\Sigma$-$V$ ring.
\end{example}

It is easy to see that a graded homomorphic image of a graded $\Sigma$-$V$ ring is again a graded $\Sigma$-$V$ ring, but a graded extension of a graded $\Sigma$-$V$ ring by a graded $\Sigma$-$V$ ring need not be a graded $\Sigma$-$V$ ring even in the case of Leavitt path algebras, as the next example shows.

\begin{example} \rm
Let $E$ be the graph 
\[
\begin{array}
[c]{ccccccccc}%
\bullet_{v_{11}} & \overset{e_{11}}{\longrightarrow} & \bullet_{v_{12}} &
\overset{e_{12}}{\longrightarrow} & \bullet & \dashrightarrow & \cdot & \cdot
& \cdot\\
\uparrow &  & \uparrow &  & \uparrow &  &  &  & \\
\bullet_{v_{21}} & \longrightarrow & \bullet_{v_{22}} & \longrightarrow &
\bullet & \dashrightarrow & \cdot & \cdot & \cdot
\end{array}
\]

\noindent If $S=\langle v_{11} \rangle$ is the ideal generated by $v_{11}$, then $S$ is the
graded socle and $S\neq L_{K}(E)$ as $v_{2n}\notin S$ for all
$n\geq1$. Being graded semi-simple, $S$ is a graded $\Sigma$-$V$ ring and
$L_{K}(E)/S$ is also graded semi-simple and hence is a graded $\Sigma$-$V$ ring. But $L_{K}(E)$ is not a graded $\Sigma$-$V$ ring. To see this,
first note that, since $E$ is acyclic, by Theorem 2.10 in \cite{HR}, the graded socle $S\cong
_{\gr}\Ma_{\Lambda}(K)$. So $S$ is a graded
direct sum of isomorphic graded simple modules. If $L_{K}(E)$ is a graded $\Sigma$-$V$ ring, $S$ is graded injective and hence a graded direct summand of
$L_{K}(E)$. Then $L_{K}(E)\cong_{\gr}S\oplus(L_{K}(E)/S)$
is a graded direct sum of graded simple modules and so $L_{K}(E
)=\Soc^{\gr}(L_{K}(E))=S$, a contradiction. Hence $L_{K}(E)$ is not a
graded $\Sigma$-$V$ ring. 
\end{example}

Next we show how the ideas of Section $4$ without the use of
gradings give rise to the description of the Leavitt path algebra $L_K(E)$ of an
arbitrary graph $E$ which is a (not necessarily graded) $\Sigma$-$V$ ring. In
this case, the graph $E$ is shown to be acyclic so $L$ becomes von Neumann
regular and is a subdirect product of semi-simple rings.

Recall that a ring $R$ is said to be a left/right $V$-ring if every simple
left/right $R$-module is injective. We begin with establishing a necessary
condition for a Leavitt path algebra to be a $V$-ring. 

\begin{lemma}
\label{rwr} Suppose $R$ is a ring with local units. If $R$ is a right $V$-ring, then $R$ is right weakly regular,
that is, $I^{2}=I$ for any right ideal $I$ of $R$.
\end{lemma}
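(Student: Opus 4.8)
The plan is to establish the non-trivial inclusion $I\subseteq I^{2}$, the reverse inclusion $I^{2}\subseteq I$ being automatic for a right ideal. Suppose, for a contradiction, that some $a\in I$ does not lie in $I^{2}$. Since $I^{2}$ is again a right ideal of $R$, the quotient $R/I^{2}$ is a right $R$-module and $\bar a=a+I^{2}\neq 0$. The first ingredient is the classical characterization of right $V$-rings as those rings over which every right module has zero radical, equivalently, over which every right ideal is an intersection of maximal right ideals. Granting this for $R/I^{2}$, the element $\bar a$ must avoid some maximal submodule; pulling back to $R$ yields a maximal right ideal $M$ with $I^{2}\subseteq M$ and $a\notin M$.

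The decisive step is then a one-line computation that makes essential use of the local units. Since $a\in I$ and $a\notin M$ we have $I\not\subseteq M$, so $I+M$ is a right ideal strictly larger than $M$, and maximality of $M$ forces $I+M=R$. Choose an idempotent $u\in R$ with $ua=a=au$ (possible as $R$ has local units). Writing $u=c+m$ with $c\in I$ and $m\in M$, we obtain
\[
a=ua=(c+m)a=ca+ma .
\]
Now $ca\in I^{2}\subseteq M$ because both $c$ and $a$ belong to $I$, while $ma\in M$ because $M$ is a right ideal. Hence $a\in M$, contradicting the choice of $M$. Therefore $I\subseteq I^{2}$, i.e.\ $I=I^{2}$, for every right ideal $I$.

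I expect the only real obstacle to be a careful verification, in the setting of rings with local units, of the cited fact that a right $V$-ring has zero radical on every right module (equivalently, that the right ideal $I^{2}$ is cut out by the maximal right ideals containing it). In the unital case this is standard: for a nonzero $x$ in a module $N$ one picks a maximal submodule $L$ of the cyclic module $xR$ and uses injectivity of the simple module $xR/L$ to extend the quotient map $xR\to xR/L$ to a map $N\to xR/L$, whose kernel is a maximal submodule of $N$ missing $x$. For a ring with local units this argument runs unchanged once one observes that a unital module satisfies $x\in xR$, so that $xR$ is genuinely cyclic and admits maximal submodules by Zorn's lemma, and that Baer's criterion — hence the extension property of injective modules — is available for unital modules over rings with local units. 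The other point worth flagging is that one must use the \emph{left} relation $a=ua$ rather than $a=au$: it is exactly this that forces the cross term $ca$ into $I\cdot I=I^{2}$ instead of the uncontrolled product $R\cdot I$.
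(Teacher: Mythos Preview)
Your proof is correct and follows essentially the same approach as the paper's own proof. The only cosmetic difference is that the paper writes $R=aR+M$ and decomposes $u=ax+m$ (so that $axa\in I^{2}$ because $ax\in I$ and $a\in I$), whereas you use the slightly coarser decomposition $R=I+M$ and write $u=c+m$ with $c\in I$; both lead to the same one-line contradiction $a=ua\in I^{2}+M=M$, and both rely on the Villamayor characterization of $V$-rings extended to the local-units setting, which the paper likewise asserts without detailed proof.
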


\begin{proof}
It is easy to check that a theorem of Villamayor on $V$-rings (see \cite{L}) holds for a $V$-ring
$R$ with local units, namely, every right ideal is an intersection of maximal
right ideals of $R$. Let $I$ be a non-zero right ideal of $R$. If $I\neq
I^{2}$, let $a\in I\backslash I^{2}$. Since $I^{2}$ is an intersection of
maximal right ideals, there is a maximal right ideal $M$ containing $I^{2}$
such that $a\notin M$. Then $R=aR+M$. Let $u$ be a local unit satisfying
$au=ua=a$. Write $u=ax+m$ where $x\in R$ and $m\in M$. Then $a=ua=axa+ma\in
I^{2}+M=M$, a contradiction. Hence $I^{2}=I$ for every right ideal $I$ of $R$.
\end{proof}

We need the following result from \cite{ARS}.

\begin{lemma}[Theorem 3.1, \cite{ARS}] \label{rwr=>K}  
Let $L:=L_{K}(E)$ be a Leavitt path algebra of an arbitrary graph $E$. If $I^{2}=I$ for
every right ideal $I$ of $L$, then the graph $E$ satisfies
Condition (K), that is, every vertex $v$ on a simple closed
path $c$ is also part of another simple closed path $c^{\prime}$  different from $c$.
\end{lemma}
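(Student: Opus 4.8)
The plan is to prove the contrapositive: assuming $E$ fails Condition (K), I will exhibit a right ideal $I$ of $L:=L_K(E)$ with $I^{2}\neq I$. The first move is a reduction to a graph containing a cycle with no exit. It is a standard fact about Condition (K) (see, e.g., \cite{AAS-1}) that if $E$ does not satisfy Condition (K), then there is an admissible pair $(H,S)$ such that the quotient graph $F:=E\backslash(H,S)$ fails Condition (L), i.e., $F$ contains a cycle $c$, say based at a vertex $w$ and of length $n\geq 1$, having no exit. Next I observe that the hypothesis ``$J^{2}=J$ for every right ideal $J$'' passes to quotient rings: if $\bar J$ is a right ideal of $L/I(H,S)$, its preimage $\widetilde J$ is a right ideal of $L$ containing $I(H,S)$, so $\widetilde J^{2}=\widetilde J$, and applying the canonical surjection gives $\bar J^{2}=\bar J$ (every right ideal of the quotient arises as such a preimage). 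Since $L/I(H,S)\cong L_{K}(F)$, it therefore suffices to produce a non-idempotent right ideal in $L_K(F)$. So from here on I may assume $E=F$ has a cycle $c$ of length $n$, based at $w$, with no exit.

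For such a graph, since no vertex on $c$ emits any edge other than the one lying on $c$, a direct computation with the $(CK)$ relations (or \cite[Lemma 2.7.1]{AAS-1}) shows that the corner $wLw$ is spanned by $\{c^{k}:k\geq 0\}\cup\{(c^{\ast})^{k}:k\geq 1\}$, and $w\mapsto 1$, $c\mapsto x$, $c^{\ast}\mapsto x^{-1}$ defines a ring isomorphism $wLw\cong K[x,x^{-1}]$. Now set $\varepsilon=c-w\in wLw$ and let $I=\varepsilon L$; note $w\varepsilon=\varepsilon=\varepsilon w$ and $w\varepsilon^{2}=\varepsilon^{2}=\varepsilon^{2}w$. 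For any $a,b\in L$ we have $\varepsilon a\varepsilon b=\varepsilon(waw)\varepsilon b$, so $I^{2}=\varepsilon(wLw)\varepsilon L=\varepsilon^{2}(wLw)L=\varepsilon^{2}L$, using commutativity of $wLw\cong K[x,x^{-1}]$ for the middle equality and $wL=(wL)(wL)$ (which holds since $w=w\cdot w\cdot w\in(wL)(wL)$) for the last. Finally, if $I^{2}=I$ then $\varepsilon\in\varepsilon^{2}L$; squeezing by $w(-)w$ and using $\varepsilon=w\varepsilon w$ and $\varepsilon^{2}=\varepsilon^{2}w$ yields $\varepsilon\in\varepsilon^{2}\,wLw$. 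Under the isomorphism $wLw\cong K[x,x^{-1}]$ this says $x-1\in(x-1)^{2}K[x,x^{-1}]$, hence $1=(x-1)f(x)$ for some Laurent polynomial $f$, which is impossible since $x-1$ is not a unit in $K[x,x^{-1}]$. Thus $I^{2}\neq I$, contradicting the hypothesis, so $E$ satisfies Condition (K).

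The step I expect to be the main obstacle is the opening reduction, namely invoking (or reproving) the graph-theoretic characterization that the failure of Condition (K) produces, in some quotient graph $E\backslash(H,S)$, a cycle without exit; one should cite this precisely from \cite{AAS-1} or supply the short argument that kills off the hereditary saturated closure of the ranges of the exits along a ``bad'' cycle. Everything after that reduction is a routine computation inside the corner $wLw\cong K[x,x^{-1}]$, where the only care needed is in tracking the idempotent $w$ so that all the relevant products stay inside that corner; the punchline — that $x-1$ is a non-unit in a Laurent polynomial ring — is immediate.
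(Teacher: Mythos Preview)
The paper does not supply its own proof of this lemma; it is stated as a citation of Theorem~3.1 in \cite{ARS} and used as a black box. So there is nothing in the present paper to compare your argument against line by line.

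That said, your proof is correct and is essentially the standard route to this result. The reduction ``failure of Condition~(K) $\Rightarrow$ some quotient graph $E\backslash(H,S)$ fails Condition~(L)'' is indeed the well-known equivalence (Condition~(K) holds iff every quotient graph satisfies Condition~(L)); it appears in \cite{AAS-1} and can be proved directly by taking $H$ to be the saturated closure of the ranges of the exits of a cycle $c$ based at a vertex lying on no other simple closed path, together with $S=B_H$. Your observation that right weak regularity passes to quotients is immediate. In the reduced situation, your computation inside the corner $wLw\cong K[x,x^{-1}]$ is clean: the identity $pp^{\ast}=w$ for every initial segment $p$ of $c$ (which follows from CK-2 since each vertex on a cycle without exit emits a single edge) gives the corner description, and the chain $I^{2}=\varepsilon(wLw)\varepsilon L=\varepsilon^{2}(wLw)L=\varepsilon^{2}wL=\varepsilon^{2}L$ is justified exactly as you say. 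The contradiction $x-1$ invertible in $K[x,x^{-1}]$ then finishes it. This is, up to cosmetic choices, the argument one finds in \cite{ARS}; you have not taken a genuinely different path, just reproduced the cited proof.
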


\begin{proposition}
\label{sigmaV=>vo Neumann regular} If $L:=L_{K}(E)$ is a $\Sigma$-$V$ ring,
then the graph $E$ contains no cycles and $L$ is von Neumann regular.
\end{proposition}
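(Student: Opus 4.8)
The plan is to show that if $L:=L_K(E)$ is a $\Sigma$-$V$ ring, then $E$ contains no cycles; von Neumann regularity then follows immediately from the theorem of Abrams--Rangaswamy (item (1) in the list following Theorem~\ref{AR}), which states that $L_K(E)$ is von Neumann regular if and only if $E$ is acyclic. So the whole task reduces to ruling out cycles.

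First I would observe that a $\Sigma$-$V$ ring is in particular a $V$-ring, so by Lemma~\ref{rwr} we get $I^2=I$ for every right ideal $I$ of $L$. Then Lemma~\ref{rwr=>K} forces $E$ to satisfy Condition~(K). The point of Condition~(K) is that now \emph{every} cycle in $E$ has an exit: if $c$ is a cycle based at $v$, Condition~(K) produces another closed path $c'$ at $v$ different from $c$, and since $c$ is a cycle (no repeated vertices) while $c'$ is a different closed path at the same base, $c'$ must use an edge out of some vertex of $c$ that is not the corresponding edge of $c$ --- i.e.\ $c$ has an exit.

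Next, suppose for contradiction that $E$ does contain a cycle; by the previous paragraph it has an exit $f$ at some vertex $v$, say $f\neq e_1$ where $c=e_1\cdots e_n$ is based at $v$. The idea is to build, inside $L$, a countable set of orthogonal infinite matrix units and use them to manufacture a direct sum of infinitely many copies of a simple module that fails to be injective --- contradicting the $\Sigma$-$V$ hypothesis. Concretely, using the exit I would construct, for each $k\geq 1$, a nonzero element $\varepsilon_{ij}^{(k)}$ of the form $c^{\,i}ff^{*}(c^{*})^{j}$ (as in the proof of Theorem~\ref{siginjec}, (f)$\Rightarrow$(b)), giving matrix units $\{\varepsilon_{ij}: i,j\geq 1\}$ and hence a subring of $vLv$ isomorphic to $\Ma_{\infty}(K)$ sitting as a graded (in fact two-sided inside the corner) ideal-like piece; more usefully, the orthogonal idempotents $\varepsilon_{kk}=c^{k}ff^{*}(c^{*})^{k}$ generate an infinite direct sum $\bigoplus_{k\geq 1}\varepsilon_{kk}L$ of right ideals inside $vL$. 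The hard part --- and the main obstacle --- is to turn this infinite orthogonal family into the correct non-injectivity statement: one chooses for each $k$ a maximal submodule $M_k\subset \varepsilon_{kk}L$ with all quotients $\varepsilon_{kk}L/M_k$ isomorphic to a fixed simple module $S$, so that $\bigoplus_k(\varepsilon_{kk}L/M_k)$ is a direct sum of copies of $S$; if $L$ were a $\Sigma$-$V$ ring this would be injective, hence a direct summand of the cyclic module $vL/\bigoplus_k M_k$, forcing the infinite direct sum $\bigoplus_k(\varepsilon_{kk}L/M_k)$ to be finitely generated, which is absurd. This is exactly the mechanism used in the proof of Proposition~\ref{grSigmaV=>df}, transported to the non-graded setting, where it is even cleaner because all ideals and modules in sight need not be graded. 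Having derived a contradiction, we conclude $E$ is acyclic, and then $L_K(E)$ is von Neumann regular by \cite{AR}, completing the proof.

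A cleaner alternative for the last step, which I would present if it shortens the argument: once we know every cycle has an exit, invoke directly the fact (used in Theorem~\ref{siginjec}) that a cycle with an exit produces inside $L$ a subring isomorphic to $\Ma_{m}(K)$ for every $m$, so $L$ cannot have bounded index of nilpotence; but a $\Sigma$-$V$ ring whose simple modules are $\Sigma$-injective is, via the socle-essentiality argument, forced into a strong finiteness condition that contradicts this. Whichever route is taken, the essential content is: $\Sigma$-$V$ $\Rightarrow$ Condition~(K) $\Rightarrow$ every cycle has an exit $\Rightarrow$ (by an infinite-matrix-unit / non-injectivity argument) no cycle at all $\Rightarrow$ $E$ acyclic $\Rightarrow$ $L$ von Neumann regular, and then the subdirect-product-of-semisimple-rings description follows from Corollary~\ref{siginjeccc} applied in the degenerate (acyclic) case together with the fact that the Jacobson radical of $L$ is zero.
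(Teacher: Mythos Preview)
Your argument is correct and uses the same two ingredients as the paper: (i) the $V$-ring property forces Condition~(K) via Lemmas~\ref{rwr} and~\ref{rwr=>K}, and (ii) the infinite-direct-summand mechanism of Proposition~\ref{grSigmaV=>df} (in its ungraded form) rules out cycles with exits. The only difference is packaging. The paper first runs the ungraded version of Proposition~\ref{grSigmaV=>df} in the abstract to conclude that $L$ is directly-finite, then invokes Theorem~\ref{munclassroom} to deduce that no cycle has an exit, and finally plays this off against Condition~(K). You instead first obtain Condition~(K), assume a cycle (hence with exit) exists, and then re-run the Proposition~\ref{grSigmaV=>df} argument concretely with the matrix units $\varepsilon_{ij}=c^{i}ff^{*}(c^{*})^{j}$. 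Both routes are equally short; the paper's is slightly more modular (it reuses Theorem~\ref{munclassroom} as a black box), while yours is more self-contained but requires you to note that the $\varepsilon_{kk}L$ are mutually isomorphic (via the off-diagonal matrix units) so that the simple quotients can indeed be chosen isomorphic --- you asserted this but did not justify it.

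Your ``cleaner alternative'' paragraph, however, is not a proof: the claim that a $\Sigma$-$V$ ring is ``forced into a strong finiteness condition'' via a socle-essentiality argument is too vague to stand on its own, and unbounded index of nilpotence does not by itself contradict the $\Sigma$-$V$ property without going back through exactly the argument you already gave. I would drop that paragraph entirely.
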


\begin{proof}
Now the same proof of  Proposition \ref{grSigmaV=>df} without the grading
shows that $L$ is directly-finite and so, by Theorem \ref{munclassroom}, no closed path in
the graph $E$ has an exit. On the other hand, since $L$ is also a right
$V$-ring, Lemma \ref{rwr} \ and Lemma \ref{rwr=>K} imply that the graph $E$
satisfies Condition (K) which in particular implies that every cycle in $E$
has an exit. In view of these contradicting statements we conclude that the
graph $E$ contains no cycles. By~\cite[Theorem 1]{AR}  $L$ is
von Neumann regular.
\end{proof}

Now using Proposition
\ref{sigmaV=>vo Neumann regular} and repeating the ungraded version of the
proof of Theorem \ref{main Th}, we obtain the following description of Leavitt
path algebras which are $\Sigma$-$V$ rings.

\begin{theorem}
\label{Sigma-V-LPA} Let $E$ be an arbitrary graph. Then for $L:=L_{K}(E)$ the following
properties are equivalent.
\begin{enumerate}[\upshape(a)]
\item $L$ is a $\Sigma$-$V$ ring;
\item $L$ is von Neumann regular and, for each primitive ideal $P$ of $L$, $L/P\cong
\Ma_{n}(K)$, where $n$ could be possibly infinite.
\end{enumerate}
\end{theorem}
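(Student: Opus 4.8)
The plan is to establish the two implications separately, using as the crucial reduction the fact---available from Proposition~\ref{sigmaV=>vo Neumann regular} in one direction and from \cite{AR} in the other---that the underlying graph is acyclic, which by Theorem~5.1 of \cite{HR} makes every one-sided ideal of $L$ graded and thereby erases the distinction between simple and graded simple modules (and between primitive and graded primitive ideals).

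First I would prove (a)$\Longrightarrow$(b). Assuming $L$ is a $\Sigma$-$V$ ring, Proposition~\ref{sigmaV=>vo Neumann regular} gives that $E$ is acyclic and $L$ is von Neumann regular. By Theorem~5.1 of \cite{HR} every one-sided ideal of $L$ is graded, so the simple right $L$-modules are exactly the graded simple right $L$-modules and $L$ is, in particular, a graded $\Sigma$-$V$ ring. Then Theorem~\ref{main Th}\,(c) applies: for every primitive ideal $P$ of $L$---which is automatically graded primitive---$L/P$ is graded isomorphic to $\Ma_{\Lambda}(K)$ or to $\Ma_{\Upsilon}(K[x^{n},x^{-n}])$ with the matrix gradings of Section~\ref{matrixGrading}. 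Since $E\backslash(H,S)$ inherits acyclicity from $E$, the ``comet'' alternative cannot occur, so $L/P\cong\Ma_{\Lambda}(K)$ for a (possibly infinite) index set $\Lambda$, which is precisely (b).

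For (b)$\Longrightarrow$(a): if $L$ is von Neumann regular, then $E$ is acyclic by Theorem~1 of \cite{AR}, so once more every one-sided ideal of $L$ is graded and simple modules coincide with graded simple modules. Let $S$ be a simple right $L$-module and let $P$ be its annihilator in $L$, a primitive ideal. By hypothesis $L/P\cong\Ma_{\Lambda}(K)$ is semisimple, hence $S$ is $\Sigma$-injective as a right $L/P$-module. Von Neumann regularity of $L$ makes $L/P$ flat as a left $L$-module, and the quotient map $L\rightarrow L/P$ is an epimorphism of rings with local units, so by the ungraded form of Proposition~\ref{6.17} every direct sum of copies of $S$---being injective over $L/P$---remains injective over $L$. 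Thus $S$ is $\Sigma$-injective over $L$, and $L$ is a $\Sigma$-$V$ ring.

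The step I expect to demand the most care is not either implication proper but the connecting plumbing: checking that acyclicity of $E$ indeed forces every primitive ideal to be graded (so that Theorem~\ref{main Th} may legitimately be invoked), and checking that $L/P$ is flat over $L$---which holds because every module over a von Neumann regular ring with local units is flat---so that Proposition~\ref{6.17} genuinely applies. Once these points are secured, both directions are direct appeals to results already established.
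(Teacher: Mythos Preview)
Your argument is correct and follows essentially the same strategy the paper sketches: use Proposition~\ref{sigmaV=>vo Neumann regular} (resp.\ \cite{AR}) to force $E$ to be acyclic, then exploit Theorem~5.1 of \cite{HR} to collapse the graded/ungraded distinction and read off the structure of $L/P$ from the work already done in Theorem~\ref{main Th}; your treatment of (b)$\Rightarrow$(a) via flatness and the ungraded form of Proposition~\ref{6.17} is exactly what ``repeating the ungraded version of the proof of Theorem~\ref{main Th}'' amounts to for the implication (c)$\Rightarrow$(a).

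There is one small difference worth noting. For (a)$\Rightarrow$(b) the paper intends that you rerun the ungraded analogue of the chain (a)$\Rightarrow$(b)$\Rightarrow$(c) in Theorem~\ref{main Th} (i.e.\ show directly that $L/P$ is categorically artinian and then identify it as a matrix ring over $K$). You instead bootstrap: from acyclicity and ``simple $=$ graded simple'' you conclude that $L$ is a \emph{graded} $\Sigma$-$V$ ring, and then quote Theorem~\ref{main Th}(c) as a black box. This shortcut is legitimate, but the implication ``$L$ is $\Sigma$-$V$ and simple $=$ graded simple $\Longrightarrow$ $L$ is graded $\Sigma$-$V$'' silently uses the standard fact that a graded module which is injective in Mod-$R$ is automatically graded-injective (take any extension and project onto the degree-zero component; see \cite{NvO} or \cite{H}). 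You should make that step explicit, since ``simple $=$ graded simple'' alone does not obviously transport $\Sigma$-injectivity to graded $\Sigma$-injectivity. Once this is cited, your route is slightly more economical than redoing the categorical-artinian argument by hand.
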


\begin{corollary}
If a Leavitt path algebra $L_K(E)$ is a $\Sigma$-$V$ ring, then it is a graded $\Sigma$-$V$ ring. 
\end{corollary}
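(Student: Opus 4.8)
The plan is to deduce the statement from the characterization of graded $\Sigma$-$V$ Leavitt path algebras in Theorem~\ref{siginjecmm}, together with the description of $\Sigma$-$V$ Leavitt path algebras in Theorem~\ref{Sigma-V-LPA} and the reduction in Proposition~\ref{sigmaV=>vo Neumann regular}. Assume $L:=L_K(E)$ is a $\Sigma$-$V$ ring. By Proposition~\ref{sigmaV=>vo Neumann regular} the graph $E$ is acyclic and $L$ is von Neumann regular; being a Leavitt path algebra, $L$ is moreover graded von Neumann regular and has homogeneous local units. Since $E$ is acyclic, Theorem~5.1 of \cite{HR} applies: every one-sided ideal of $L$, and of each of its graded quotients, is graded, so that graded simple right $L$-modules coincide with simple $L$-modules and graded primitive ideals of $L$ coincide with primitive ideals of $L$. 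I would then show that every graded simple right $L$-module is graded $\Sigma$-injective, by passing to a suitable quotient of $L$ over which the situation is graded semisimple and then descending graded injectivity along the quotient map.

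Concretely, fix a graded simple right $L$-module $S$; by the above it is a simple $L$-module, and $P:=\operatorname{ann}_L(S)$ is a primitive, hence graded primitive, ideal of $L$. Since $L$ is a $\Sigma$-$V$ ring, Theorem~\ref{Sigma-V-LPA} gives a ring isomorphism $L/P\cong \Ma_n(K)$ with $n$ possibly infinite, so $L/P$ is a direct sum of simple right ideals (the rows of $\Ma_n(K)$). On the other hand $L/P\cong_{\gr}L_K(F)$ for the corresponding quotient graph $F$, which is acyclic because $E$ is, so Theorem~5.1 of \cite{HR} forces each of those simple right ideals to be graded. Hence $L/P$ is a graded direct sum of graded simple right modules, i.e.\ $L/P$ is graded right semisimple, and consequently every unital graded right $L/P$-module is graded injective over $L/P$ (this is exactly the step used in the proof of (c)$\Rightarrow$(a) of Theorem~\ref{siginjecmm}).

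To finish, let $I$ be any index set and $\alpha_i\in\mathbb{Z}$; the module $A:=\bigoplus_{i\in I}S(\alpha_i)$ is annihilated by $P$, hence is a unital graded right $L/P$-module, and so is graded injective over $L/P$ by the previous paragraph. The quotient map $L\to L/P$ is a graded epimorphism of $\mathbb{Z}$-graded rings with homogeneous local units, and $L/P$ is graded flat as a graded left $L$-module since $L$ is graded von Neumann regular; Proposition~\ref{6.17} then shows that $A$ is graded injective as a right $L$-module. As $S$ was arbitrary, $L$ is a graded right $\Sigma$-$V$ ring, and the left-handed statement follows symmetrically (or from the left-right symmetry of the descriptions in Theorems~\ref{Sigma-V-LPA} and~\ref{siginjecmm}). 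The one delicate point is the passage from the purely ring-theoretic output $L/P\cong \Ma_n(K)$ of Theorem~\ref{Sigma-V-LPA} to a statement about \emph{graded} modules, and this is exactly where acyclicity of $E$ — inherited by every quotient graph — does the work, through Theorem~5.1 of \cite{HR}, which collapses the graded and ungraded notions of simple module, primitive ideal and semisimplicity; the remaining verifications (homogeneous local units, graded flatness of $L/P$ over $L$) are routine. An essentially equivalent alternative is to verify condition (c) of Theorem~\ref{siginjecmm} directly: Theorem~3.7 of \cite{AAPS} shows that $F$ is row-finite with every infinite path tail-equivalent to an infinite sink, whereupon the graded isomorphism~(\ref{eight}) upgrades $L/P\cong \Ma_n(K)$ to a graded isomorphism onto $\Ma_{\Lambda}(K)(|\overline{p^{\bar v}}|)$, which is condition (c).
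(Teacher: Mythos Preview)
Your proof is correct and follows essentially the same route as the paper: acyclicity of $E$ from Proposition~\ref{sigmaV=>vo Neumann regular}, the collapse of graded and ungraded notions via \cite[Theorem~5.1]{HR}, and then the verification of condition~(c) of Theorem~\ref{siginjecmm} (equivalently, re-running its (c)$\Rightarrow$(a) argument through Proposition~\ref{6.17}). You are more explicit than the paper about the delicate passage from the ring isomorphism $L/P\cong \Ma_n(K)$ to a graded statement, but the underlying argument is the same.
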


\begin{proof}
Let $L_K(E)$ be a $\Sigma$-$V$ ring. By Proposition \ref{sigmaV=>vo Neumann regular}, $E$ is acyclic and by \cite[Theorem 5.1]{HR}, every one-sided ideal of $L_K(E)$ is graded and so above theorem states that, in this case, for every graded primitive ideal $P$, $L_K(E)/P$ is a matrix ring over $K$ and so by Theorem \ref{siginjecmm}, $L_K(E)$ is a graded $\Sigma$-$V$ ring.
\end{proof}

\begin{remark}
In view of Theorems \ref{main Th} and \ref{Sigma-V-LPA}, it is clear
that the well-known Leavitt ring $L(1,n)$ is not a $\Sigma$-$V$ ring and also not a
graded $\Sigma$-$V$ ring.
\end{remark}

\noindent As noted in the Introduction, Kaplansky showed that a commutative ring $R$ is a $V$-ring if and only if $R$ is von Neumann regular and that this result no longer holds for non-commutative rings. So it is natural to ask what happens to this result if $R$ is a Leavitt path algebra? Lemmas \ref{rwr}, \ref{rwr=>K} and Theorem \ref{DirFinLPAs} enable us to easily prove the following.

\begin{proposition}
If a Leavitt path algebra L is a right V-ring, then L is von Neumann regular if and only if L is directly-finite.
\end{proposition}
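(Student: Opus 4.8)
The plan is to prove the equivalence one implication at a time, noting that one direction is immediate and the other requires combining Condition (K) with graded direct-finiteness.

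First I would dispose of the implication ``$L$ von Neumann regular $\Longrightarrow$ $L$ directly-finite'', which in fact does not use the $V$-ring hypothesis at all. By \cite[Theorem 1]{AR}, $L=L_K(E)$ is von Neumann regular if and only if $E$ is acyclic, and an acyclic graph vacuously satisfies the condition that no cycle in $E$ has an exit; hence Theorem~\ref{munclassroom} gives that $L$ is directly-finite. This is precisely the observation already recorded in the Remark following Theorem~\ref{munclassroom}, so nothing new is needed here.

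For the converse, ``$L$ directly-finite $\Longrightarrow$ $L$ von Neumann regular'', I would run the following chain. Since $L$ is a right $V$-ring and has local units, Lemma~\ref{rwr} shows that $L$ is right weakly regular, i.e. $I^2=I$ for every right ideal $I$ of $L$; then Lemma~\ref{rwr=>K} shows that $E$ satisfies Condition (K). As in the proof of Proposition~\ref{sigmaV=>vo Neumann regular}, Condition (K) forces every cycle of $E$ to have an exit: a vertex on a simple closed path lies on a second distinct simple closed path, which produces a bifurcation at some vertex of the cycle, hence an exit. On the other hand, since $L$ is directly-finite, Theorem~\ref{munclassroom} tells us that no cycle in $E$ has an exit. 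The only way to reconcile these two statements is for $E$ to contain no cycles at all, i.e. $E$ is acyclic; then \cite[Theorem 1]{AR} yields that $L$ is von Neumann regular, completing the argument.

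There is essentially no serious obstacle here, since every ingredient (the two Lemmas, Theorem~\ref{munclassroom}, and the von Neumann regular $\Leftrightarrow$ acyclic characterization) is already available. The only point that warrants a moment's care is the passage from Condition (K) to ``every cycle has an exit'', but this is standard and is used verbatim in Proposition~\ref{sigmaV=>vo Neumann regular}, so I would simply invoke it rather than re-derive it.
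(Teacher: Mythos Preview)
Your proof is correct and follows exactly the approach the paper indicates: the paper itself does not spell out the argument but simply states that Lemmas~\ref{rwr}, \ref{rwr=>K} and Theorem~\ref{DirFinLPAs} enable one to easily prove the proposition, and you have combined precisely these ingredients (together with the acyclic $\Leftrightarrow$ von Neumann regular characterization from \cite{AR}) in the natural way.
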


We finish the paper by asking the following two questions.

\begin{question}
If a Leavitt path algebra $L$ is a right $V$-ring, is it von Neumann regular?
\end{question}

\begin{question}
 If a Leavitt path algebra $L$ is von Neumann regular, is it a right $V$-ring?
\end{question}

\bigskip

\noindent {\bf Acknowledgement.} We are thankful to the referee for not only a very careful reading of this paper, but also making many helpful suggestions and criticisms which led to substantial reorganization and improvement of this paper.

\bigskip

\end{document}